\def\ps@pprintTitle{%
   \let\@oddhead\@empty
   \let\@evenhead\@empty
   \def\@oddfoot{\reset@font\hfil\thepage\hfil}
   \let\@evenfoot\@oddfoot
}
\newtheorem{theorem}{Theorem}
\newproof{proof}{Proof}
\newcommand{\lbd}{\lambda}
\renewcommand{\phi}{\varphi}
\newcommand{\dt}{\, dt}
\newcommand{\ham}{\mathcal{H}}
\newcommand{\beqn}{\begin{eqnarray}}
\newcommand{\eeqn}{\end{eqnarray}}
\newcommand{\beqnn}{\begin{eqnarray*}}
\newcommand{\eeqnn}{\end{eqnarray*}}
\newcommand{\parc}[1]{\ensuremath{\left(#1\right)}}
\DeclareMathOperator{\e}{e}
\renewcommand{\le}{\leqslant}
\renewcommand{\ge}{\geqslant}
   \newcommand{\R}{\ensuremath{\mathds R}}
\date{\today} 
\begin{document}
\begin{frontmatter}
\title{Optimal control of the customer dynamics based on marketing policy}
\author[mat1]{S. Rosa\corref{cor1}}
\address[mat1]{Departamento de Matem\'atica and Instituto de Telecomunica\c{c}\~{o}es, Universidade da Beira Interior, 6201-001 Covilh\~a,
   Portugal}
\ead{rosa@ubi.pt}
\author[mat2]{P. Rebelo}
\ead{rebelo@ubi.pt}
\author[mat2]{C. M. Silva}
\ead{csilva@ubi.pt}
\address[mat2]{Departamento de Matem\'atica and CMA-UBI, Universidade da Beira Interior, 6201-001 Covilh\~a, Portugal}
\author[gest1]{H. Alves}
\address[gest1]{Departamento de Gest\~ao e Economia and NECE-UBI, Universidade da Beira Interior, 6201-001 Covilh\~a, Portugal}
\ead{halves@ubi.pt}
\author[gest2]{P. G. Carvalho}
\address[gest2]{CIDESD-UBI, Universidade da Beira Interior, 6201-001 Covilh\~a, Portugal}
\ead{pguedes@ubi.pt}

\cortext[cor1]{Corresponding author.}

\begin{keyword}
compartmental model, optimal control,  marketing
\MSC[2010] 91C99\sep 34C60\sep 49M05
\end{keyword}

\begin{abstract}
We consider an optimal control problem for a non-autonomous model of ODEs that describes the evolution of the number of customers in some firm. Namely we study the best marketing strategy. Considering a $L^2$ cost functional, we establish the existence and uniqueness of optimal solutions, using an inductive argument to obtain uniqueness on the whole interval from local uniqueness. We also present some simulation results, based on our model, and compare them with results we obtain for an $L^1$ cost functional. For the $L^1$ cost functional the optimal solutions are of bang-bang type and thus easier to implement, because at every moment possible actions are chosen from a finite set of possibilities. For the autonomous case of $L^2$ problem, we show the effectiveness of the optimal control strategy against other formulations of the problem with simpler controls.
\end{abstract}
\end{frontmatter}\thispagestyle{plain}
\section{Introduction}

Firms spend millions of euros on marketing budgets. The CMO report  conducted in 2017 by the Fuqua School of Business, the American Marketing Association and Delloite shows that firms allocate, in general, between 10\% and 20\% of their re\-ve\-nu\-es on marketing budgets, depending on the sector where they operate. Con\-si\-de\-ring the high amounts involved, it is very important to optimize that allocation. However, as stated by Gupta and Steenburgh \cite{gupta2008} allocating marketing resources is a complex decision that until recently has been done based on very simple heuristics or decision rules.

Among marketing decisions and strategy is the decision to invest in referrals programs. These programs encourages current customers to recruit new customers based on rewards \cite{schmitt2011}. Contrary to other marketing programs purely based on spontaneous word-of-mouth, referral programs are marketer directed with possibility to control message content \cite{berman2016}. However, studies that help marketers to decide about the resource allocation to referral programs are scarce.

For decades, firms have been searching for the best way to maximize profits and reduce costs. Classical models usually look for ways that help firms allocating their marketing resources while maximizing profits 
\cite{albadvi2011robust}.
However, more recently models have tried to maximize customer equity (the net present value of the future profit flow over a customer’s lifetime \cite{payne2001diagnosing} 
through an optimal marketing resources allocation \cite{kumar2006clv}). 
In this sense, and based on the assumption that the number of customers in a market is limited, it is important to attract/capture new customers at earliest as possible as, otherwise, they can be attracted/captured by competitors. At the same time, a customer late attraction/caption will also reduce their customer equity.

 Following the growing interest of social networks by product marketing ma\-na\-gers, recently the classic epidemiologic models have been applied with success to specific marketing communication strategy, commonly referred as viral marketing. An application of epidemiology to a real-world problem  can be found in ~\cite{rodrigues2016}.

Previously, in ~\cite{Silva2016} the authors of this work  proposed a compartmental model suitable to describe the dynamics of the number of customers of a given firm. That model was given by a system of ordinary differential equations whose variables correspond to groups of customers and potential customers divided according to their profile and whose parameters reflect the structure of the underlying social network and the marketing policy of the firm.  Understand the flows between these groups and its consequences on the raise of customers of the firm was the main goal. Highlight the usefulness of these models in helping firms deciding their marketing policy was another objective.

Election campaign managers and companies marketing products/services managers, are interested in spreading a message by a given deadline, using limited resources. So, the optimal resource allocation over the time of the campaign is required and the formulation of such situation as an optimal control problem is suggested. In ~\cite{Kandhway2014}, that problem is tackled using two epidemic models, a SIS and a SIR.

In this paper, we consider a modified version of model ~\cite{Silva2016}, governed by the system of ordinary differential equations:
\begin{equation}\label{eq:modelo0}
\begin{cases}
\dot R= -\lbd_2 R+\lbd_1 C -\gamma(t) R +\alpha\, \beta(t) P R/N \\
\dot C= -\lbd_1 C+\lbd_2 R -\gamma(t) C + (1-\alpha)\beta(t) PR/N \\
\dot P= - \beta(t) PR/N +\gamma(t) (R + C)
\end{cases}
\end{equation}
with initial conditions
\begin{equation}\notag 
R(0), C(0), P(0)\geqslant 0,
\end{equation}
where  $R$ is the number of referral customers, $C$ is  the number of regular customer, $P$ is the number of potential customers  and $N=R+C+P$.

The parameters of the model represent the following: $\lambda_1$ is the natural transition rate between regular customers and referral costumers, given by the number of regular customers that become referral customers without external influence over the number of regular customers (by ``without external influence" we mean without being influenced by referral customers); $\lambda_2$ is the natural transition rate between referral costumers and regular customers, given by the number of referral customers that become regular customers without external influence over the number of referral customers; $\gamma(t)$ is the time varying customer defection rate, equal to the number of customers that cease to be customers over the number of customers (we assume that this rate is the same among regular and referral costumers); $\beta(t)$ is the pull effect due to marketing campaigns, corresponding to the quotient of the outcome of marketing campaigns by the number of potential customers (by ``outcome of marketing campaigns'' it is meant the number of potential customers that become customers in the sequence of marketing campaigns per unitary marketing cost per time unit); finally, $\alpha$ is the percentage of referral costumers among the new customers.

The main difference between the above model and the model presented in \cite{Silva2016} is that, instead of using a single compartment corresponding to potential clients and assuming that a fixed percentage of those potential clients are referral clients, in~\cite{Silva2016}, the potential clients are divided in two subpopulations (corresponding to potential regular clients and potential referral clients).

We stress that by using time varying parameters, $\beta(t)$ and $\gamma(t)$, in (\ref{eq:modelo0}) we obtain a non-autonomous model that is potentially more realistic.  The objective of this paper is to consider an optimal control problem for such non-autonomous model.

\section{Optimal control problem}

Inspired in \cite{Kandhway2014}, we assume that the campaigner can allocate its resources in two ways. At time $t$, he can directly recruit individuals from the population with rate $u_1(t)$, to be clients (via publicity in mass media). In addition, he can incentivize clients to make further recruitments (e.g. monetary benefits, discounts or coupons to current customers who refer their friends to buy services/products from the company). This effectively increases the ``spreading rate'' at time $t$ from $\beta(t)$ to $\beta(t)+u_2(t)$ where $u_2(t)$ denotes the ``word-of-mouth" control signal which the campaigner can adjust at time $t$.

\begin{figure}[h]
\begin{picture}(345,190)(-30,-20)
\setlength{\unitlength}{.3mm}
\put(5,0){\framebox(60,45){$C$}}
\put(5,100){\framebox(60,45){$R$}}
\put(270,0){\framebox(60,145){$P$}}
\put(80,140){\vector(1,0){180}}
\put(160,145){$\gamma(t) R$}
\put(260,120){\vector(-1,0){180}}
\put(155,127){$\alpha_1\, u_1 P$}
\put(260,108){\vector(-1,0){180}}
\put(135,92){$ \alpha_2\,(\beta(t)+ u_2)  PR/N $}
\put(80,40){\vector(1,0){180}}
\put(160,45){$\gamma(t) C$}
\put(260,20){\vector(-1,0){180}}
\put(145,27){$(1-\alpha_1)u_1 P$}
\put(260,10){\vector(-1,0){180}}
\put(120,-7){$(1-\alpha_2)(\beta(t)+ u_2) PR/N$}
\put(25,50){\vector(0,1){45}}
\put(0,70){$\lambda_1 C$}
\put(35,95){\vector(0,-1){45}}
\put(40,70){$\lambda_2 R$}
\end{picture}
\caption{The compartmental model.}
\label{diagram:model}
\end{figure}
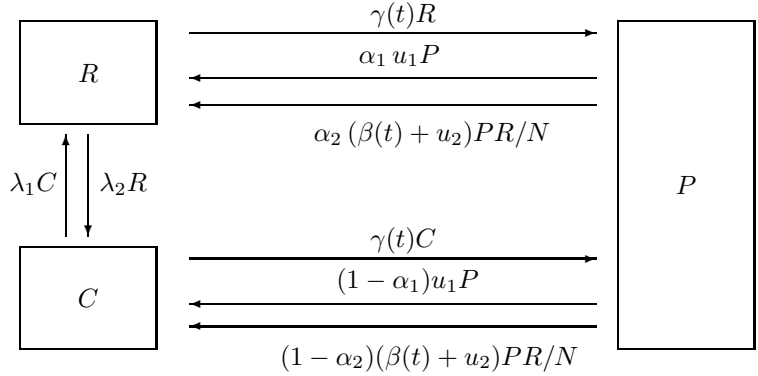

The diagram of the non-autonomous  model we propose is shown in Figure \ref{diagram:model}. The respective equations are the following:

\begin{equation}\label{eq:modelo}
\begin{cases}
\dot R= -\lbd_2 R+\lbd_1 C -\gamma(t) R+\alpha_1\, u_1 P +\alpha_2\, (\beta(t)+u_2) P R/N \\
\dot C= -\lbd_1 C+\lbd_2 R -\gamma(t) C + (1-\alpha_2)(\beta(t)+u_2) PR/N + u_1(1-\alpha_1)P\\
\dot P= - (\beta(t)+u_2) PR/N-u_1 P +\gamma(t) R +\gamma(t) C
\end{cases}
\end{equation}
with initial conditions
\begin{equation}\label{eq:cond_ini}
R(0), C(0), P(0)\geqslant 0.
\end{equation}
The parameters $u_1$, $u_2$ will be taken in the space $L^{\infty}$ functions such that \linebreak $u_1\in[0,{u_1}_{\text{max}}]$ and $u_2\in[0,{u_2}_{\text{max}}]$.

Our purpose is to minimize the number of potential customers and the cost associated to the control of the marketing campaigns. To obtain the best reduction in the number of potential customers, we minimize the evolution history, $P(t)$, $0\leqslant t\leqslant t_f$. Note that minimizing the number of potential customers correspond to maximizing the number of customers (potential and referral) and that by minimizing the evolution history of potential customers, instead of the final number, we are increasing the customer equity.

We consider the optimal control problem:
\begin{equation}
\label{control-problem}
\tag{P}
\begin{gathered}
\mathcal{J}(P,u_1,u_2) =\int_0^{t_f}\kappa_1\,P+ \kappa_2\, u_1^2+\kappa_3 \, u_2^2 ~dt \quad \longrightarrow \quad \min\\
\begin{cases}
\dot R = -\lbd_2 R+\lbd_1 C -\gamma R+\alpha_1\, u_1 P +\alpha_2\, (\beta(t)+u_2) P R/N \\ \notag
\dot C = -\lbd_1 C+\lbd_2 R -\gamma C + (1-\alpha_2)(\beta(t)+u_2) PR/N + \notag u_1(1-\alpha_1)P\\ \notag
\dot P = - (\beta(t)+u_2) PR/N-u_1 P +\gamma R +\gamma C
\end{cases}\\
(C(0),R(0),P(0))=(C_0,R_0,P_0),
\end{gathered}
\end{equation}
where $0<\kappa_1,\kappa_2,\kappa_3 <\infty$ and $R_0,C_0,P_0$ are non-negative, the state variables are absolutely continuous functions, $(C(\cdot),R(\cdot),P(\cdot)) \in AC([0,t_f];\R^4)$, and the controls are Lebesgue integrable, $(u_1(\cdot),u_2(\cdot)) \in L^1([0,t_f];[0,{u_1}_{\max}]\times[0,{u_2}_{\max}])$.

In sections~\ref{section:EOS} to~\ref{section:Uniq} we show that a solution of problem~\eqref{control-problem} exists and is unique in the whole interval $[0,t_f]$. To establish the existence of solution, we use a standard result that assures the existence of an optimal control pair $(u_1^*,u_2^*)$ and a correspon\-ding solution of the initial value problem that minimizes the cost functional over $L^1([0,t_f];[0,{u_1}_{\max}]\times[0,{u_2}_{\max}])$. The fact that the optimal controls are bounded, assures that the optimal controls are in fact in $L^{\infty}([0,t_f];[0,{u_1}_{\max}]\times[0,{u_2}_{\max}])$ (see section~\ref{section:COC}).

To obtain uniqueness, we assume, by contradiction, that there are two distinct optimal
pairs of state and co-state variables
$$((C,P,R),(p_1,p_2,p_3)) \quad \text{and} \quad ((C^*,P^*,R^*),(p_1^*,p_2^*,p_3^*)),$$
which correspond to two distinct optimal controls
$\left(u,v\right)$ and
$\left(u^*, u^*\right)$, verifying \eqref{eq:Pontryagin-CPR-Mayer-6} and
\eqref{eq:Pontryagin-CPR-Mayer-7}. The existence of some compact positively invariant region $\Gamma$,
which is independent on the controls, allows us to prove that there is a contradiction unless
the state variables, the co-state variables and the optimal controls are the same on a small time interval $[0, T]$.
The next step consists in describing an iterative procedure that allows one to extend the uniqueness of
solution to the interval $[0,(k+1)T]$, assuming we have uniqueness on the interval $[0,kT]$. This allows us to conclude that we have
the required uniqueness on the whole interval after a finite number of steps.

\section{Existence of an optimal solution}\label{section:EOS}

To prove  that there is an optimal solution of problem~\eqref{control-problem},
we will use a result that ensures the existence of the solution for optimal control problems
contained in Theorem III.4.1 and Corollary III.4.1 in~\cite{Fleming-Rishel-Springer-Verlag-1975},
Theorem~\ref{teo:existence-Fleming-Raymond-1974} below.
Problem \eqref{control-problem} is an optimal control problem in Lagrange form:
\begin{equation}
\label{general-problem}
\begin{gathered}
J(x,u)=\displaystyle \int_{t_0}^{t_1} \mathcal{L}(t,x(t),u(t)) \ dt
\longrightarrow \min ,\\
\begin{cases}
x'(t)=f\left(t,x(t),u(t)\right), \quad \text{a.e.} \ t \in [t_0,t_1],\\
x(t_0)=x_0,
\end{cases}\\
x(\cdot) \in AC\left([t_0,t_1];\R^n\right),
\quad u(\cdot) \in L^1([t_0,t_1];U\subset \R^m).
\end{gathered}
\end{equation}
In the above context, we say that a pair $(x,u)
\in AC\left([t_0,t_1];\R^n\right) \times L^1([t_0,t_1];U)$
is feasible if it satisfies the Cauchy problem in~\eqref{general-problem}.
We denote the set of all feasible pairs by $\mathcal F$.
Next, we recall

\begin{theorem}[See \cite{Fleming-Rishel-Springer-Verlag-1975}]
\label{teo:existence-Fleming-Raymond-1974}
For problem~\eqref{general-problem}, suppose that $f$ and $\mathcal{L}$
are continuous and there exist positive constants $C_1$ and $C_2$ such that,
for $t \in \R$, $x,x_1,x_2 \in \R^n$ and $u \in \R^m$, we have
\begin{enumerate}[a)]
\item \label{teo:existence-Fl-Ray-1974-a}
$\|f(t,x,u)\| \le C_1(1+\|x\|+\|u\|)$;

\item \label{teo:existence-Fl-Ray-1974-b}
$\|f(t,x_1,u)-f(t,x_2,u)\| \le C_2\|x_1-x_2\|(1+\|u\|)$;

\item \label{teo:existence-Fl-Ray-1974-c}
$\mathcal F$ is non-empty;

\item \label{teo:existence-Fl-Ray-1974-d}
$U$ is closed;

\item \label{teo:existence-Fl-Ray-1974-e}
there is a compact set $S$ such that $x(t_1) \in S$
for any state variable $x$;

\item \label{teo:existence-Fl-Ray-1974-f}
$U$ is convex, $f(t,x,u)=\alpha(t,x)+\beta(t,x)u$,
and $\mathcal{L}(t,x,\cdot)$ is convex on $U$;

\item \label{teo:existence-Fl-Ray-1974-g}
$\mathcal{L}(t,x,u)\ge c_1|u|^\beta-c_2$, for some $c_1>0$ and $\beta >1$.
\end{enumerate}
Then, there exist $(x^*,u^*)$ minimizing $J$ on $\mathcal F$.
\end{theorem}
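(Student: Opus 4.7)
The plan is to apply the direct method of the calculus of variations. Since $\mathcal{F}$ is non-empty by (c), I would begin with a minimizing sequence $(x_n,u_n)\in\mathcal{F}$ with $J(x_n,u_n)\to m:=\inf_{\mathcal{F}} J$. The coercivity hypothesis (g), $\mathcal{L}(t,x,u)\ge c_1|u|^\beta - c_2$ with $\beta>1$, yields a uniform bound $\int_{t_0}^{t_1}|u_n(t)|^\beta\, dt\le K$, so $\{u_n\}$ is bounded in $L^\beta([t_0,t_1];\R^m)$. Since $L^\beta$ is reflexive, I can then extract a subsequence (not relabelled) with $u_n\rightharpoonup u^*$ weakly in $L^\beta$.

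For the states, the linear growth condition (a) gives $\|\dot x_n(t)\|\le C_1(1+\|x_n(t)\|+\|u_n(t)\|)$, and a Gronwall argument combined with the $L^\beta\subset L^1$ bound on $u_n$ produces a uniform sup-bound on $\|x_n\|_\infty$. Equi-absolute continuity of $\{x_n\}$ follows from the uniform integrability of $\{\dot x_n\}$, so by Arzel\`a--Ascoli a further subsequence satisfies $x_n\to x^*$ uniformly, with $x^*\in AC([t_0,t_1];\R^n)$; hypothesis (e) is compatible with this limit. To identify $x^*$ as the trajectory associated to $u^*$, I would exploit the affine structure (f), $f(t,x,u)=\alpha(t,x)+\beta(t,x)u$: the Lipschitz estimate (b) together with uniform convergence of $x_n$ yields uniform convergence of $\alpha(\cdot,x_n)$ and $\beta(\cdot,x_n)$, and the pairing of strong convergence of the coefficient with weak convergence of $u_n$ allows passing to the limit in the integral form $x_n(t)=x_0+\int_{t_0}^t f(s,x_n(s),u_n(s))\, ds$, giving $\dot x^*=f(t,x^*,u^*)$.

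Admissibility $u^*(t)\in U$ almost everywhere will follow from Mazur's lemma applied to $(u_n)$ combined with the closedness and convexity of $U$ (conditions (d) and (f)), so that strong-$L^\beta$ limits of convex combinations of $u_n$ stay in $U$ a.e.\ and coincide with $u^*$. Finally, convexity of $\mathcal{L}(t,x,\cdot)$ on $U$, joint continuity of $\mathcal{L}$, and the uniform convergence $x_n\to x^*$ would let me invoke a classical Ioffe--Tonelli lower semicontinuity theorem to obtain $J(x^*,u^*)\le \liminf_n J(x_n,u_n)=m$, so $(x^*,u^*)$ is a minimizer. The main obstacles I anticipate are the limit passage in the bilinear term $\beta(t,x_n)u_n$ and the lower semicontinuity step, both of which rest decisively on the affine-in-$u$ structure (f) and the superlinear coercivity (g): without (g) one loses weak compactness of the controls, and without (f) the nonlinear dependence on $u$ would obstruct passing to the limit inside the ODE.
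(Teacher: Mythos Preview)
The paper does not actually prove this theorem: it is quoted verbatim as a result from Fleming--Rishel (Theorem~III.4.1 and Corollary~III.4.1 in~\cite{Fleming-Rishel-Springer-Verlag-1975}), and is then invoked as a black box to obtain Theorem~\ref{teo:exist-sol}. So there is no ``paper's own proof'' to compare against.

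That said, your sketch is the standard direct-method argument and is essentially how the result is obtained in Fleming--Rishel. The logical skeleton --- coercivity (g) $\Rightarrow$ weak $L^\beta$ compactness of controls; growth bound (a) plus Gronwall $\Rightarrow$ Arzel\`a--Ascoli compactness of states; affine structure (f) plus Lipschitz bound (b) $\Rightarrow$ passage to the limit in the dynamics; convexity and closedness of $U$ plus Mazur $\Rightarrow$ admissibility of the limit control; convexity of $\mathcal L$ in $u$ $\Rightarrow$ weak lower semicontinuity of $J$ --- is correct. Two small points you may want to make explicit if you write this out in full: first, separate continuity (and Lipschitz continuity in $x$) of $\alpha(t,x)$ and $\beta(t,x)$ is not assumed directly but follows from continuity of $f$, its affine form, and condition~(b) by evaluating at two distinct values of $u$; second, the pairing ``strong convergence of $\beta(\cdot,x_n)$ times weak convergence of $u_n$'' requires that $\beta(\cdot,x_n)$ stay uniformly bounded, which again comes from (a) and the uniform bound on $x_n$. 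None of this is a gap, just bookkeeping.
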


Applying Theorem~\ref{teo:existence-Fleming-Raymond-1974} to our problem we obtain the following result:

\begin{theorem}\label{teo:exist-sol}
  There exists an optimal control pair $(u_1^*,u_2^*)$ and a corresponding solution of the initial value problem in~\eqref{control-problem}, $(R^*,C^*,P^*)$, that minimizes the cost functional $\mathcal J$ in~\eqref{control-problem} over $L^1([0,t_f];[0,{u_1}_{\max}]\times[0,{u_2}_{\max}])$.
\end{theorem}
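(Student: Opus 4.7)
The strategy is to apply Theorem~\ref{teo:existence-Fleming-Raymond-1974} by verifying its seven hypotheses in turn, after one preparatory step that is essential to making the whole argument work.

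First, I would establish a compact positively invariant region. Summing the three equations in \eqref{control-problem} yields $\dot R+\dot C+\dot P\equiv 0$, so $N(t)\equiv N_0:=R_0+C_0+P_0$ along any admissible trajectory. A tangent-cone check on the faces $R=0$, $C=0$, $P=0$ (on each such face the vector field points into the non-negative octant for every admissible $(u_1,u_2)$) shows non-negativity is preserved. Hence every trajectory remains in the compact simplex $\Gamma=\{(R,C,P)\in\R_{\ge 0}^3 : R+C+P=N_0\}$, on which $N$ is bounded below by $N_0$, so the ratio $PR/N$ is well-defined and uniformly bounded by $N_0$. This directly yields hypothesis~(e), and (c) follows by taking, for instance, $u_1\equiv u_2\equiv 0$ and solving the resulting Cauchy problem via Picard--Lindel\"of.

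The remaining structural hypotheses are then immediate. Hypothesis~(d) holds since $U=[0,{u_1}_{\max}]\times[0,{u_2}_{\max}]$ is closed; (f) also follows since $U$ is convex, the right-hand side of \eqref{control-problem} has the affine-in-$u$ form $f(t,x,u)=\alpha(t,x)+B(t,x)u$ (every occurrence of $u_1$ and $u_2$ enters linearly), and $\mathcal L(t,x,u)=\kappa_1 P+\kappa_2 u_1^2+\kappa_3 u_2^2$ is strictly convex in $u$. For (g), since $P\ge 0$ on $\Gamma$, we have $\mathcal L(t,x,u)\ge \min(\kappa_2,\kappa_3)\,|u|^2$, so one may take $\beta=2$ and $c_2=0$.

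The delicate point, and the main obstacle, is conditions (a) and (b): as written, $f$ is neither globally Lipschitz nor of linear growth in $x$, because $N$ may vanish outside $\Gamma$. The standard remedy is to replace $f$ outside a neighbourhood of $\Gamma$ by a smooth extension in which the denominator is kept bounded below by, say, $N_0/2$; since every feasible trajectory lies in $\Gamma$, neither the feasible set nor the value of $\mathcal J$ is affected by this modification. On the extension, each of the terms $PR/N$, $u_1 P$, $(\beta(t)+u_2)PR/N$, $\gamma(t)R$, $\gamma(t)C$, $\lbd_1 C$ and $\lbd_2 R$ is Lipschitz in $x$ with constant growing at most linearly in $1+|u|$, yielding (a) and (b). With all seven hypotheses verified, Theorem~\ref{teo:existence-Fleming-Raymond-1974} delivers the desired optimal pair $((R^*,C^*,P^*),(u_1^*,u_2^*))$ minimizing $\mathcal J$ over the admissible class.
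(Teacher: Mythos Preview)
Your proposal is correct and follows essentially the same route as the paper: verify the seven hypotheses of Theorem~\ref{teo:existence-Fleming-Raymond-1974} after observing that $N(t)\equiv N_0$ forces trajectories into a compact simplex. The only substantive difference is that you are more careful than the paper on two points the authors gloss over---the explicit preservation of non-negativity and, more importantly, the fact that conditions~(a) and~(b) are required for all $x\in\R^n$ while $f$ is singular where $N=0$; your smooth-extension remedy is the standard fix and is exactly what is needed to make the paper's one-line claim ``We immediately obtain~\ref{teo:existence-Fl-Ray-1974-a}) and~\ref{teo:existence-Fl-Ray-1974-b})'' rigorous.
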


\begin{proof}
We first note that, adding the equations in~\eqref{eq:modelo}, we conclude that the total population is constant: $N(t)=C_0+R_0+P_0:=N_0$. Thus $C(t),R(t),P(t) \le N_0$. Additionally, $P(t)R(t)/N(t)\le P(t)\le N_0$. We immediately obtain~\ref{teo:existence-Fl-Ray-1974-a}) and~\ref{teo:existence-Fl-Ray-1974-b}).

Conditions ~\ref{teo:existence-Fl-Ray-1974-c}) and~\ref{teo:existence-Fl-Ray-1974-d}) are immediate from the definition of $\mathcal F$ since \linebreak $U=[0,{u_1}_{max}]\times[0,{u_2}_{max}]$.

We conclude that all the state variables are in the compact set
$$\{(x,y,z) \in (\R_0^+)^3: 0 \le x+y+z = N_0\}$$
and condition~\ref{teo:existence-Fl-Ray-1974-e}) follows.

Since the state equations are linearly dependent on the controls and $L$ is quadratic in the controls, we obtain~\ref{teo:existence-Fl-Ray-1974-f}). Finally,
$$
L = \kappa_1 I+\kappa_2 u_1^2+\kappa_3 u_2^2 \ge \min\{\kappa_2,\kappa_3\}(u_1^2+u_2^2) =
\min\{\kappa_2,\kappa_3\}\|(u_1,u_2)\|^2
$$
and we establish~\ref{teo:existence-Fl-Ray-1974-g}) with $c_1=\min\{\kappa_2,\kappa_3\}$.

Thus the result follows from Theorem~\ref{teo:existence-Fleming-Raymond-1974}.
\end{proof}

\section{Characterization of the optimal controls}\label{section:COC}

In this section, using the Pontryagin
Maximum Principle \cite{book:Pont}, we characterize the solutions that, according to Theorem~\ref{teo:exist-sol}, the solution exist.

Note that Theorem~\ref{teo:existence-Fleming-Raymond-1974} does not require $U$ to be a bounded set and thus, in general, the $L^1$ optimal controls predicted by Theorem~\ref{teo:existence-Fleming-Raymond-1974} are not necessarily in $L^\infty$. As a consequence, in general, one can not assure that the optimal controls satisfy the Pontryagin Maximum Principle, see \cite{book:Pont}.

However, in our case, the compacity of the set $[0,{u_1}_{\max}]\times[0,[{u_2}_{\max}]$ assures that the control minimizers, ${u_1}^*$ and ${u_2}^*$,
are in $L^\infty$ as required by the Pontryagin Maximum principle. Furthermore, in our context, there are no abnormal minimizers \cite{abnormal} since in our case only initial conditions are imposed and, in particular, the state variables
are free at the terminal time. Thus, we can fix the cost multiplier associated with
the Lagrangian $\mathcal{L}$ to be minus one.

The Hamiltonian associated with problem~\eqref{control-problem} is given by:
\begin{equation}\notag 
\begin{split}
& \ham(t,(C,R,P),(p_1,p_2,p_3),(u_1,u_2))\\
&  =  \kappa_1 P +\kappa_2 u_1^2+\kappa_3 u_2^2 \\
& \quad + p_1(-\lbd_2 R+\lbd_1 C -\gamma(t) R+\alpha_1\, u_1 P +\alpha_2\, (\beta(t)+u_2) P R/N )\\
& \quad +p_2(-\lbd_1 C+\lbd_2 R -\gamma(t) C + (1-\alpha_2)(\beta(t)+u_2) PR/N + u_1(1-\alpha_1)P)\\
& \quad +p_3(- (\beta(t)+u_2) PR/N-u_1 P +\gamma(t) R +\gamma(t) C)
\end{split}
\end{equation}

In what follows, we use the operator $\partial_i$ to denote
the partial derivative with respect to the $i$th variable.

\begin{theorem}[Necessary optimality conditions]
$\;$\\ 
If $((C^*,R^*,P^*),({u_1}^*,{u_2}^*))$ is a mi\-nimizer of problem~\eqref{control-problem},
then there are multipliers $(p_1(\cdot),p_2(\cdot),p_3(\cdot))
\in AC([0,t_f];\R^3)$ such that
\begin{equation}
\label{eq:Pontryagin-CPR-Mayer-1}
\begin{cases}
\dot p_1 = \lambda_2(p_1-p_2)+\gamma(t)(p_1-p_3)\\[2mm]
\quad \quad +\left[p_3-\alpha_2 p_1-(1-\alpha_2)p_2\right](\beta(t)+u_2)P(C+P)/N^2\\[2mm]
\dot p_2 = \lambda_1(p_2-p_1)+\gamma(t)(p_2-p_3)\\[2mm]
\quad \quad  -\left[p_3-\alpha_2 p_1-(1-\alpha_2)p_2\right](\beta(t)+u_2)PR/N^2\\[2mm]
\dot p_3 = -\kappa_1+\left[p_3-\alpha_1 p_1-(1-\alpha_1)p_2 \right]u_1\\[2mm]
\quad \quad + \left[p_3-\alpha_2 p_1-(1-\alpha_2)p_2 \right](\beta(t)+u_2)R(C+R)/N^2
\end{cases}
\end{equation}
for almost all $t \in [0,t_f]$, with transversality conditions
\begin{equation}
\label{eq:Pontryagin-CPR-Mayer-5}
p_1(t_f)=p_2(t_f)=p_3(t_f)=p_4(t_f)=0.
\end{equation}
Furthermore, the optimal control pair is given by
\begin{equation}
\label{eq:Pontryagin-CPR-Mayer-6}
{u_1}^*=\min\left\{\max\left\{0,\dfrac{[p_3-\alpha_1 p_1 -(1-\alpha_1)p_2]P^* }{2 \kappa_2} \right\},{u_1}_{\max}\right\}
\end{equation}
and
\begin{equation}
\label{eq:Pontryagin-CPR-Mayer-7}
{u_2}^*=\min\left\{\max\left\{0,\dfrac{[p_3 -\alpha_2 p_1-(1-\alpha_2)p_2]P^* R^*}{2 \kappa_3N^*} \right\},{u_2}_{\max}\right\}.
\end{equation}
\end{theorem}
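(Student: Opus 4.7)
The plan is a direct application of the Pontryagin Maximum Principle (PMP) to problem \eqref{control-problem}. First I would verify the hypotheses of PMP. The dynamics and the Lagrangian $\mathcal{L} = \kappa_1 P + \kappa_2 u_1^2 + \kappa_3 u_2^2$ are smooth in the state--control variables, Theorem~\ref{teo:exist-sol} together with the compactness of $U = [0,{u_1}_{\max}]\times[0,{u_2}_{\max}]$ upgrades the $L^1$ minimizer to an $L^\infty$ one (as noted in the paragraph preceding the statement), and because only the initial state is fixed (the state is free at $t_f$) no abnormal minimizer occurs, so the cost multiplier may be normalized to $-1$. This justifies working with the Hamiltonian $\mathcal{H}$ already written in the excerpt, together with the canonical relations $\dot p_i = -\partial_{x_i}\mathcal{H}$ and the minimality condition on the controls.

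Next I would derive the adjoint system \eqref{eq:Pontryagin-CPR-Mayer-1} from $\dot p_i = -\partial_{x_i}\mathcal{H}$, reading from the Hamiltonian that $p_1$, $p_2$, $p_3$ are the adjoint variables for $R$, $C$, $P$ respectively. The transversality conditions \eqref{eq:Pontryagin-CPR-Mayer-5} are immediate since the state is free at $t_f$. The only mildly delicate step is the differentiation of the nonlinear term $PR/N$, with $N=R+C+P$, for which one uses
\[
\partial_R\frac{PR}{N} = \frac{P(C+P)}{N^2}, \qquad \partial_C\frac{PR}{N} = -\frac{PR}{N^2}, \qquad \partial_P\frac{PR}{N} = \frac{R(C+R)}{N^2}.
\]
Collecting the three resulting terms that carry the factor $(\beta(t)+u_2)$ into the common coefficient $p_3 - \alpha_2 p_1 - (1-\alpha_2)p_2$ then produces the compact form appearing in \eqref{eq:Pontryagin-CPR-Mayer-1}; the contributions from the $u_1$-terms in the $\dot p_3$ equation group analogously under $p_3 - \alpha_1 p_1 - (1-\alpha_1)p_2$.

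Finally, the characterization \eqref{eq:Pontryagin-CPR-Mayer-6}--\eqref{eq:Pontryagin-CPR-Mayer-7} follows from the pointwise minimization of $\mathcal{H}$ over $U$. Since $\mathcal{H}$ is strictly convex and quadratic in each $u_i$ (leading coefficients $\kappa_2,\kappa_3>0$), the equations $\partial_{u_i}\mathcal{H}=0$ determine unique unconstrained minimizers
\[
\widehat{u}_1 = \frac{[p_3 - \alpha_1 p_1 - (1-\alpha_1)p_2]\,P^*}{2\kappa_2}, \qquad \widehat{u}_2 = \frac{[p_3 - \alpha_2 p_1 - (1-\alpha_2)p_2]\,P^* R^*}{2\kappa_3\, N^*},
\]
and clipping each to its admissible interval $[0,{u_1}_{\max}]$ or $[0,{u_2}_{\max}]$ produces the $\min\{\max\{0,\cdot\},{u_i}_{\max}\}$ formulas in the statement. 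I do not foresee any deep analytic obstacle: once the applicability of PMP in normal form is justified, the proof is essentially a careful computation. The main place where errors are easy to make is the expansion of $\partial_{x}\mathcal{H}$ for the $PR/N$ terms, which must be done with care in order to recover the exact grouping of adjoint coefficients displayed in \eqref{eq:Pontryagin-CPR-Mayer-1}.
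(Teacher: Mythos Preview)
Your proposal is correct and follows essentially the same route as the paper: apply the Pontryagin Maximum Principle in normal form (justified by the free terminal state and the $L^\infty$ bound on the controls), read off the adjoint system and transversality conditions, and obtain the control formulas from the pointwise minimization of $\mathcal{H}$. The only cosmetic difference is that the paper derives \eqref{eq:Pontryagin-CPR-Mayer-6}--\eqref{eq:Pontryagin-CPR-Mayer-7} by a case split on whether each $u_i^*$ lies in the interior or on the boundary of its interval, whereas you argue via strict convexity in $u_i$ and clipping of the unconstrained minimizer; the two arguments are equivalent.
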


\begin{proof}
Properties~\eqref{eq:Pontryagin-CPR-Mayer-1} and~\eqref{eq:Pontryagin-CPR-Mayer-5} are a consequence of Pontryagin Maximum Principle.

The optimality conditions on the set
$$\{t \in [0,t_f]: 0<{u_1}^*(t)<{u_1}_{\max} \ \wedge \ 0<{u_2}^*(t)<{u_2}_{\max}\}$$ yield
$$\frac{\partial \ham}{\partial {u_1}^*}=0 \quad \Leftrightarrow \quad {u_1}^*=\dfrac{p_3-\alpha_1 p_1 -(1-\alpha_1)p_2}{2 \kappa_2}P$$
and
$$\frac{\partial \ham}{\partial {u_2}^*}=0 \quad \Leftrightarrow \quad
 {u_2}^*=\dfrac{p_3-\alpha_2 p_1-(1-\alpha_2)p_2}{2 \kappa_3}\frac{PR}{N}.$$

If $t \in \text{int} \{t \in [0,t_f]: {u_1}^*(t)={u_1}_{\max}\}$, then the maximality condition is
$$\frac{\partial \ham}{\partial {u_1}^*} \le 0 \quad \Leftrightarrow \quad \dfrac{p_3-\alpha_1 p_1 -(1-\alpha_1)p_2}{2 \kappa_2}P\ge {u_1}_{\max}$$
and if $t \in \text{int} \{t \in [0,t_f]: {u_2}^*(t)={u_2}_{\max}\}$, then the maximality condition is
$$\frac{\partial \ham}{\partial {u_2}^*} \le 0 \quad \Leftrightarrow \quad \dfrac{p_3-\alpha_2 p_1-(1-\alpha_2)p_2}{2 \kappa_3}\frac{PR}{N}\ge {u_2}_{\max}.$$

Similarly, if $t \in \text{int} \{t \in [0,t_f]: {u_1}^*(t)=0\}$, then the maximality condition is
$$\frac{\partial \ham}{\partial {u_1}^*} \ge 0 \quad \Leftrightarrow \quad \dfrac{p_3-\alpha_1 p_1 -(1-\alpha_1)p_2}{2 \kappa_2}P\le 0$$
and if $t \in \text{int} \{t \in [0,t_f]: {u_2}^*(t)=0\}$, then the maximality condition is
$$\frac{\partial \ham}{\partial {u_2}^*} \ge 0 \quad \Leftrightarrow \quad \dfrac{p_3-\alpha_2 p_1-(1-\alpha_2)p_2}{2 \kappa_3}\frac{PR}{N}\le 0.$$
\end{proof}

\section{Uniqueness of solution} \label{section:Uniq}

In this section we prove the  uniqueness of the optimal solution of~\eqref{control-problem} in the whole interval  $[0, t_f]$.
The proof of this result is inspired on~\cite{Gaff-Schaefer-MBE-2009, Mateus-Rebelo-Rosa-Silva-Torres-DCDS-S-2017}. Namely, on~\cite{Gaff-Schaefer-MBE-2009} uniqueness is established in a sufficiently small interval for some autonomous epidemiological models and in~\cite{Mateus-Rebelo-Rosa-Silva-Torres-DCDS-S-2017} the result is proved for a general non-autonomous version of one of those models and uniqueness is established on the whole interval.

\begin{theorem}\label{teo:uniqueness}
The solution of the optimal control problem~\eqref{control-problem} is unique.
\end{theorem}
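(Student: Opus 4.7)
The plan is to argue by contradiction: suppose there exist two distinct optimal sextuples $((R,C,P),(p_1,p_2,p_3))$ and $((R^*,C^*,P^*),(p_1^*,p_2^*,p_3^*))$ with associated controls $(u_1,u_2)$ and $(u_1^*,u_2^*)$, each satisfying the state equations~\eqref{eq:modelo}, the adjoint system~\eqref{eq:Pontryagin-CPR-Mayer-1}, the transversality conditions~\eqref{eq:Pontryagin-CPR-Mayer-5}, and the control characterizations~\eqref{eq:Pontryagin-CPR-Mayer-6}--\eqref{eq:Pontryagin-CPR-Mayer-7}. First I would record uniform a~priori bounds: the state trajectories lie in the compact positively invariant region $\Gamma=\{(x,y,z)\in(\R_0^+)^3:x+y+z=N_0\}$ identified in the proof of Theorem~\ref{teo:exist-sol} (with $N\equiv N_0$), and substituting these into the linear adjoint system and using~\eqref{eq:Pontryagin-CPR-Mayer-5} a backward Grönwall argument yields a uniform bound $|p_i(t)|\le K$ on $[0,t_f]$ with $K$ independent of the candidate. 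Since $x\mapsto\min\{\max\{0,x\},u_{\max}\}$ is $1$-Lipschitz, the formulas~\eqref{eq:Pontryagin-CPR-Mayer-6}--\eqref{eq:Pontryagin-CPR-Mayer-7} combined with these bounds then yield a control estimate $|u_1-u_1^*|(t)+|u_2-u_2^*|(t)\le L_u\,\Delta(t)$, where $\Delta(t):=|R-R^*|(t)+|C-C^*|(t)+|P-P^*|(t)+\sum_{i=1}^3|p_i-p_i^*|(t)$.

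Next I would establish local uniqueness on a small initial interval $[0,T]$. Subtracting the state equations pairwise and exploiting Lipschitz continuity on~$\Gamma$ (which is legitimate because $N\equiv N_0>0$) gives forward integral estimates $|R-R^*|(t)+|C-C^*|(t)+|P-P^*|(t)\le M\int_0^t\Delta(s)\,ds$; subtracting the adjoint equations and integrating backward from $t_f$ with the zero terminal data gives $\sum_{i=1}^3|p_i-p_i^*|(t)\le M\int_t^{t_f}\Delta(s)\,ds$. Introducing the exponentially weighted functional $\Phi(t):=e^{\lambda t}\bigl(|R-R^*|+|C-C^*|+|P-P^*|\bigr)(t)+e^{\lambda(t_f-t)}\sum_i|p_i-p_i^*|(t)$ and choosing $\lambda$ large enough to absorb the cross couplings between the forward state and backward adjoint pieces, one obtains a single Grönwall-type inequality whose multiplicative constant is strictly less than one on an interval of length $T$ depending only on $K$, $L_u$, $M$ and the Lipschitz data, not on the particular candidate pair. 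This forces $\Phi\equiv 0$ on $[0,T]$ and, via the control estimate, $u_i=u_i^*$ there as well.

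The final step extends uniqueness from $[0,kT]$ to $[0,(k+1)T]$ by induction. Once equality of all six variables holds at $t=kT$, the Pontryagin system may be viewed on $[kT,(k+1)T]$ as a coupled six-dimensional ODE with equal initial data at $kT$ and globally Lipschitz right-hand side (thanks to the uniform bounds and the Lipschitz control representation), so a standard forward Grönwall argument on this subinterval forces equality throughout. Because $T$ was fixed independently of $k$, after at most $\lceil t_f/T\rceil$ iterations the equality covers $[0,t_f]$ and contradicts the assumed distinctness.

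The hard part will be the local-uniqueness step: coupling the forward state estimates (anchored at $t=0$) with the backward adjoint estimates (anchored at $t_f$) into a single contractive Grönwall inequality, and showing that the length $T$ of the resulting uniqueness interval can be chosen independently of the candidate optimal pair. Picking the weight~$\lambda$ correctly is the technical core of the argument; once this is in place, the inductive extension to the whole interval $[0,t_f]$ is essentially routine because on each subinterval all six variables are already anchored at its left endpoint.
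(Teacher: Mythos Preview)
Your plan is essentially the paper's strategy: assume two optimality systems, use the Lipschitz control representations~\eqref{eq:Pontryagin-CPR-Mayer-6}--\eqref{eq:Pontryagin-CPR-Mayer-7} to bound $|u_i-u_i^*|$ by the state/co-state discrepancy, obtain uniqueness on a short interval via exponentially weighted estimates, and then iterate to cover $[0,t_f]$. The paper implements the short-interval step with an $L^2$ energy method and an explicit change of variables $R=e^{\theta t}r$, $p_i=e^{-\theta t}\phi_i$ (subtract, multiply by the difference, integrate over $[0,T]$, sum, then choose $\theta$ large and $T<\theta^{-1}\ln((\theta-\tilde C)/\hat C)$); you instead use pointwise $L^1$ bounds and a weighted functional $e^{\lambda t}(\cdot)+e^{\lambda(t_f-t)}(\cdot)$. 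These are two packagings of the same mechanism.

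One genuine difference is your inductive step. The paper re-runs the local energy estimate on each subinterval $[kT,(k+1)T]$, arguing that the constants $\tilde C,\hat C$ depend only on the invariant region and hence the same $T$ works at every stage. You instead observe that once all six variables agree at $t=kT$, the closed $6$-dimensional system (states, adjoints, with the Lipschitz control formulas substituted) is a Lipschitz ODE with common data at $kT$, so standard ODE uniqueness propagates equality forward. This is cleaner and in fact collapses the iteration to a single step, since forward uniqueness then reaches $t_f$ immediately; the paper's subinterval-by-subinterval extension is unnecessary once you have this. The price is that your first step must deliver equality of the \emph{adjoints} (not only the states) at time $T$; be explicit that your weighted Gr\"onwall inequality is run over the full interval so the adjoint anchor $p_i(t_f)=p_i^*(t_f)=0$ is actually used, otherwise the backward estimate $\sum_i|p_i-p_i^*|(t)\le M\int_t^{t_f}\Delta\,ds$ cannot be closed on $[0,T]$ alone.
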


Theorem~\ref{teo:uniqueness} establishes the uniqueness of the optimal solution of~\eqref{control-problem} throughout the time interval
where the optimal control problem was considered, $[0, t_f]$. The proof of this result is done in two steps. Namely, on a first moment we establish the uniqueness on a sufficiently small time interval $[0,T]$ and afterwards we show that the result can be extended to the whole time interval by using an induction argument.

In more detail, to prove uniqueness on a small time interval, we use a contradiction argument adapted from the argument used in~\cite{Gaff-Schaefer-MBE-2009}, in the autonomous context, and also considered in~\cite{Mateus-Rebelo-Rosa-Silva-Torres-DCDS-S-2017}, for a nonautonomous model. We start by assuming that there are two distinct optimal pairs of state and co-state variables corresponding to two different optimal controls. Making a change of variables we are able to prove that we have a contradiction unless the state and co-state variables are the same in some sufficiently small time interval and, using the characterization of the optimal controls given by~\eqref{eq:Pontryagin-CPR-Mayer-6} and~\eqref{eq:Pontryagin-CPR-Mayer-7} we conclude that the optimal controls coincide in that small time interval $[0, T]$.

The second step in the argument, consists in noting that there are two possibilities: $T = t_f$ or $T < t_f$. In the first case the proof is completed.
Otherwise, noting that the estimates used to obtain $T$ in the first place are only related with the maximum value of the parameters and the bounds for the state and co-state variables on some invariant region that is independent on the controls and using for initial conditions at time $T$ the values of the state trajectories at the right-end of the interval $[0, T]$, we obtain uniqueness on the interval $[T, 2T]$. Iterating the procedure, after a finite number of steps, we obtain uniqueness in whole the interval $[0, t_f]$.

 The proof of Theorem \ref{teo:uniqueness} may be found in \ref{appendix_proof}.

\section{Simulation} 
\begin{table}[!htb]
\centering
{\renewcommand{\arraystretch}{1.25} 
\begin{tabular}{cc}
\toprule
Parameter & Value \\
\midrule
$\alpha_1$ & 0.05\\
$\alpha_2$ & 0.10\\
$\lambda_1$ & 0.002 \\
$\lambda_2$  & $\lambda_1C_0/R_0$\\
\bottomrule
\end{tabular}}
\caption{Values of parameters.}
\label{tab:ref}
\end{table}

\begin{figure}[!htb]
       \centering
        \hspace*{-2cm}
        \begin{subfigure}[b]{0.48\textwidth}\centering
                \includegraphics[scale=0.39]{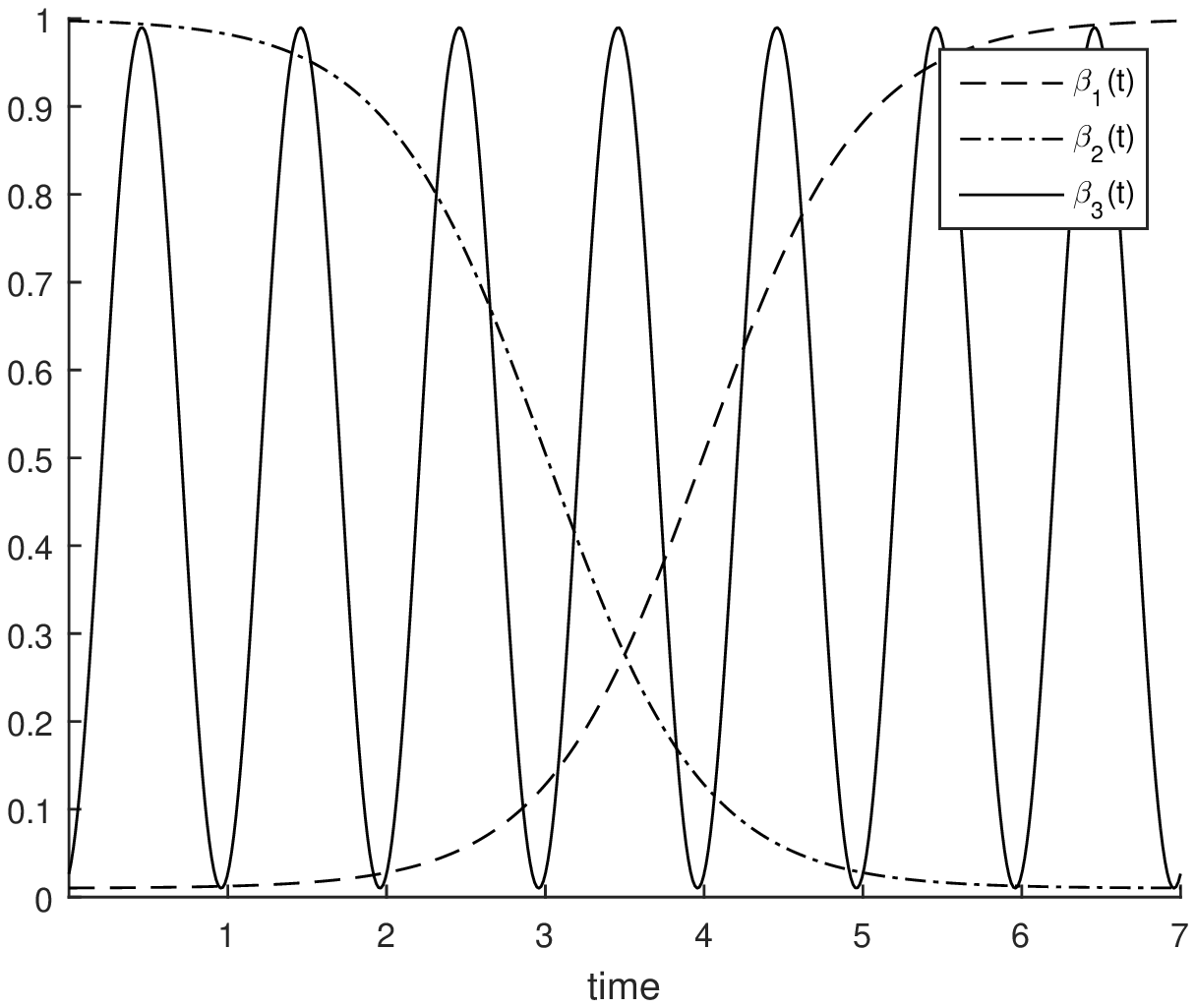}
                \caption{Functions $\beta_1(t)$, $\beta_2(t)$ and $\beta_3(t)$.}
                \label{fig:betas:3}
        \end{subfigure}
        \hspace*{1cm}
        \begin{subfigure}[b]{0.48\textwidth}\centering
                \includegraphics[scale=0.39]{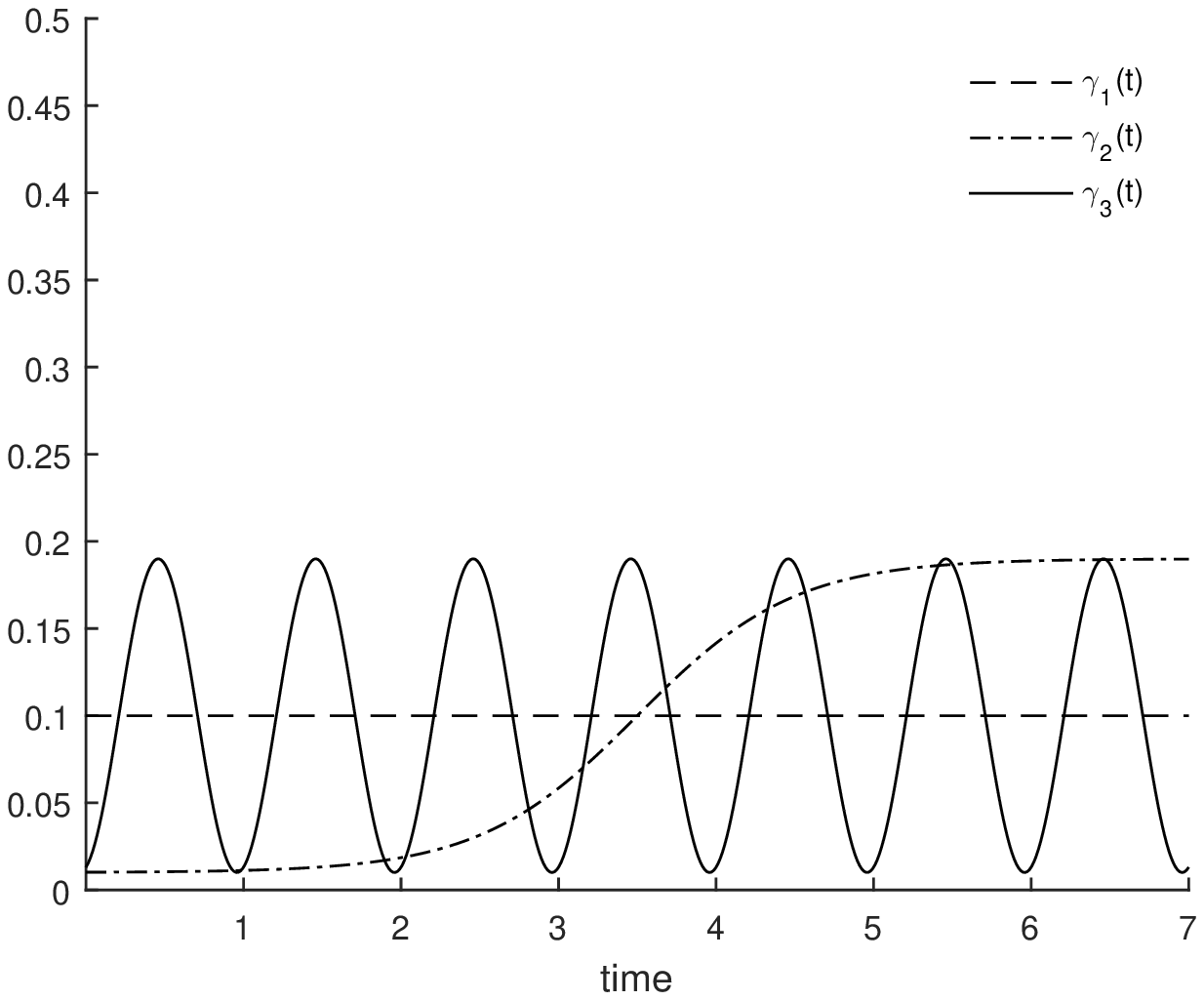}
                \caption{Functions $\gamma_1(t)$, $\gamma_2(t)$ and $\gamma_3(t)$.}
                \label{fig:gamas:3}
        \end{subfigure}\\ \centering
        \caption{Recruitment rate functions and defection rate functions. }
\end{figure}

\begin{figure}[!htb]
        \centering
        \hspace*{-2cm}
        \begin{subfigure}[b]{0.48\textwidth}\centering
                \includegraphics[scale=0.39]{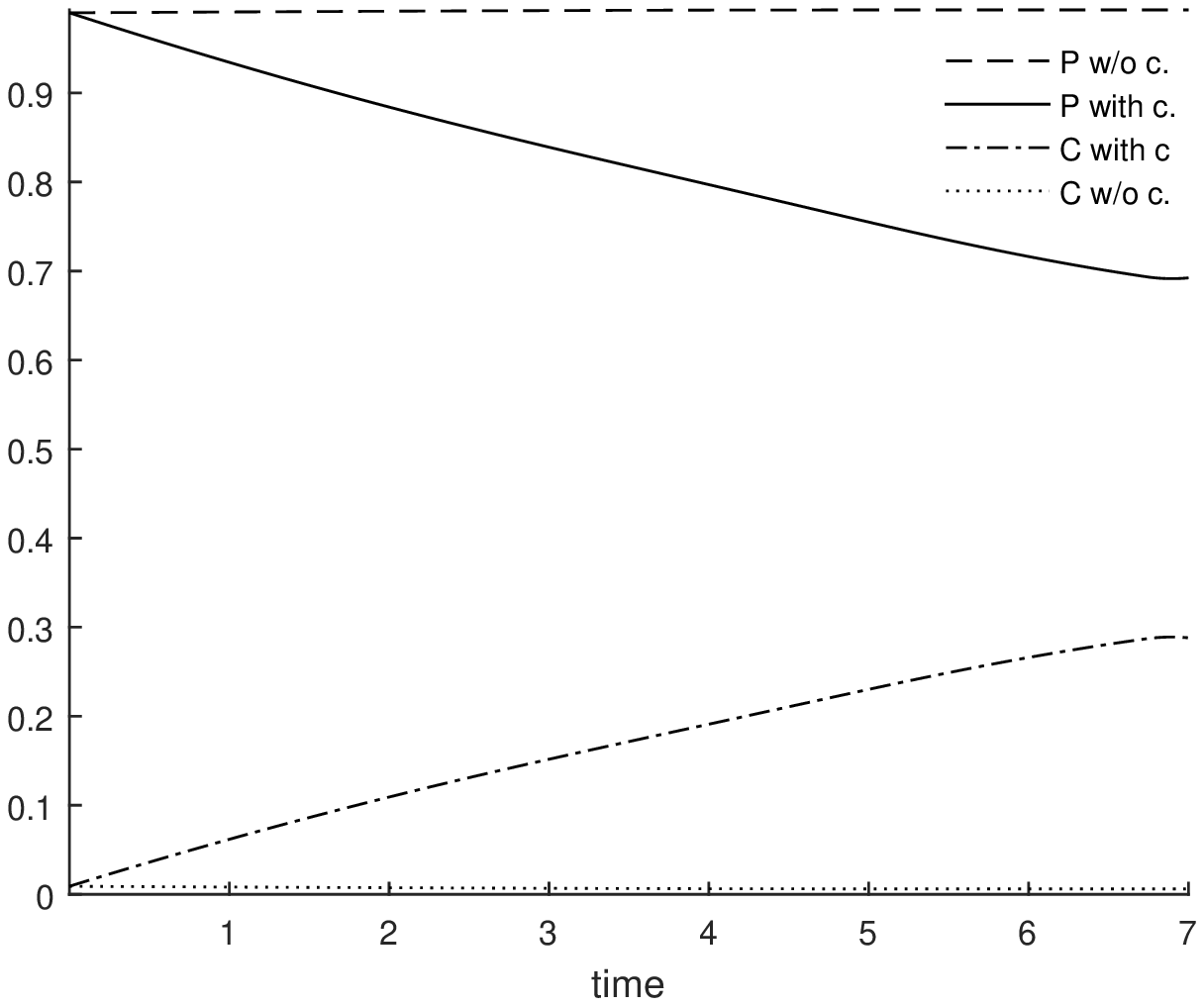}
                \caption{Evolution of $C$ and $P$. }
                \label{fig:PC_beta1}
        \end{subfigure}
        \hspace*{1cm}
        \begin{subfigure}[b]{0.48\textwidth}\centering
                \includegraphics[scale=0.39]{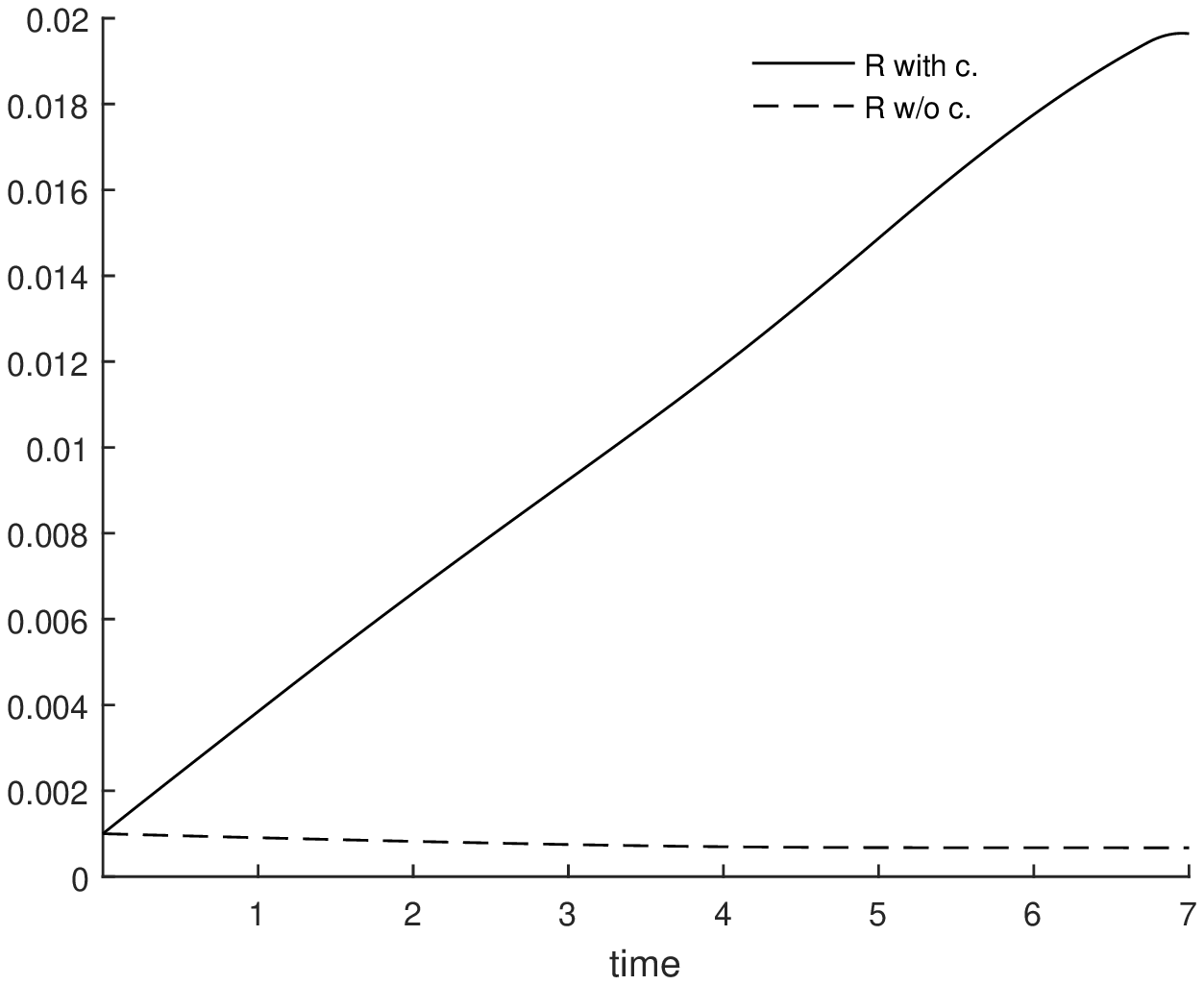}
                \caption{Evolution of $R$. }
                \label{fig:R_beta1}
        \end{subfigure}\\ \centering  \hspace*{-2cm}
        \begin{subfigure}[b]{0.48\textwidth}\centering
                \includegraphics[scale=0.39]{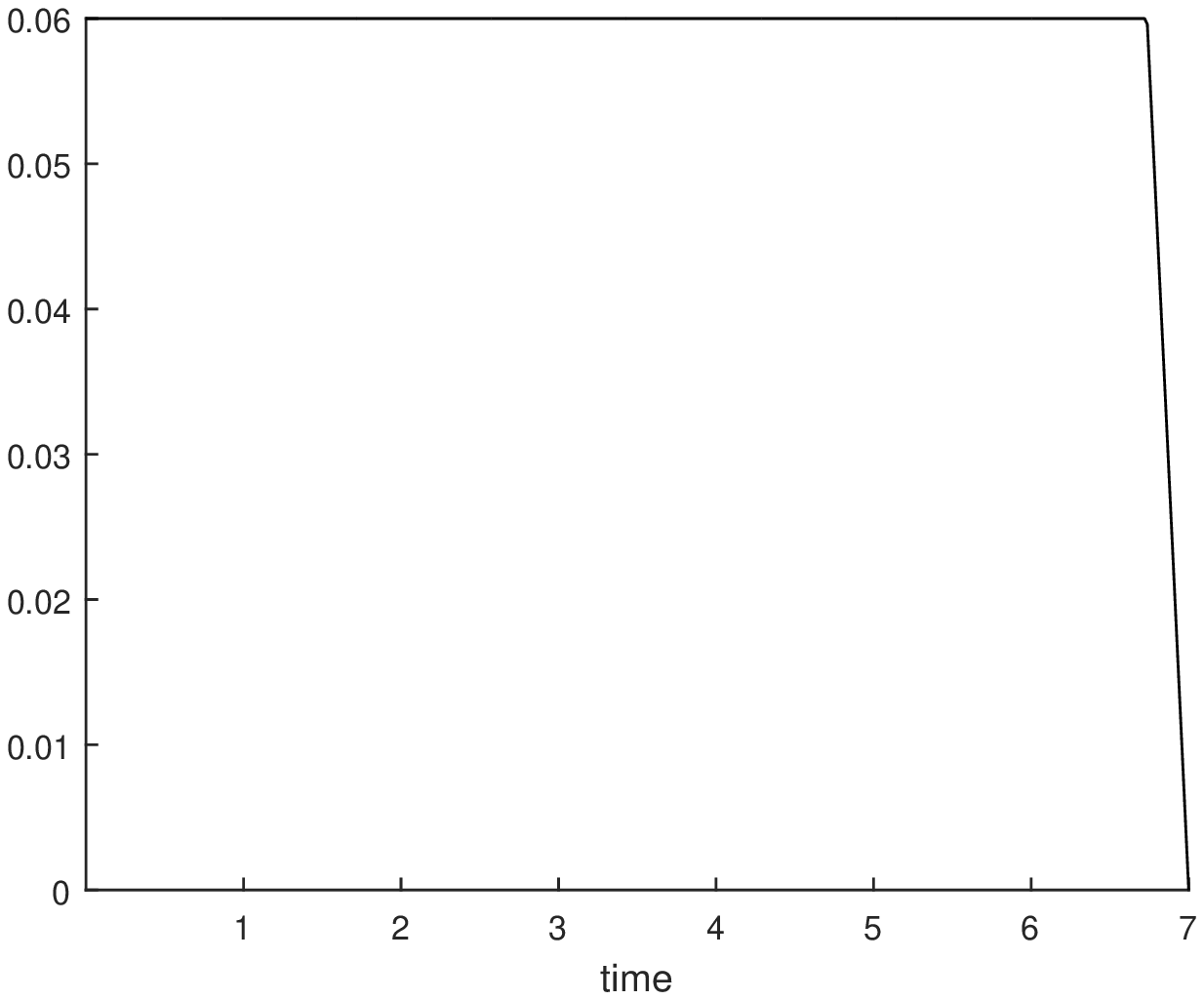}
                \caption{Variation of the optimal control $u_1$. }
                \label{fig:u1_beta1}
        \end{subfigure}
        \hspace*{1cm}
        \begin{subfigure}[b]{0.48\textwidth}\centering
                \includegraphics[scale=0.39]{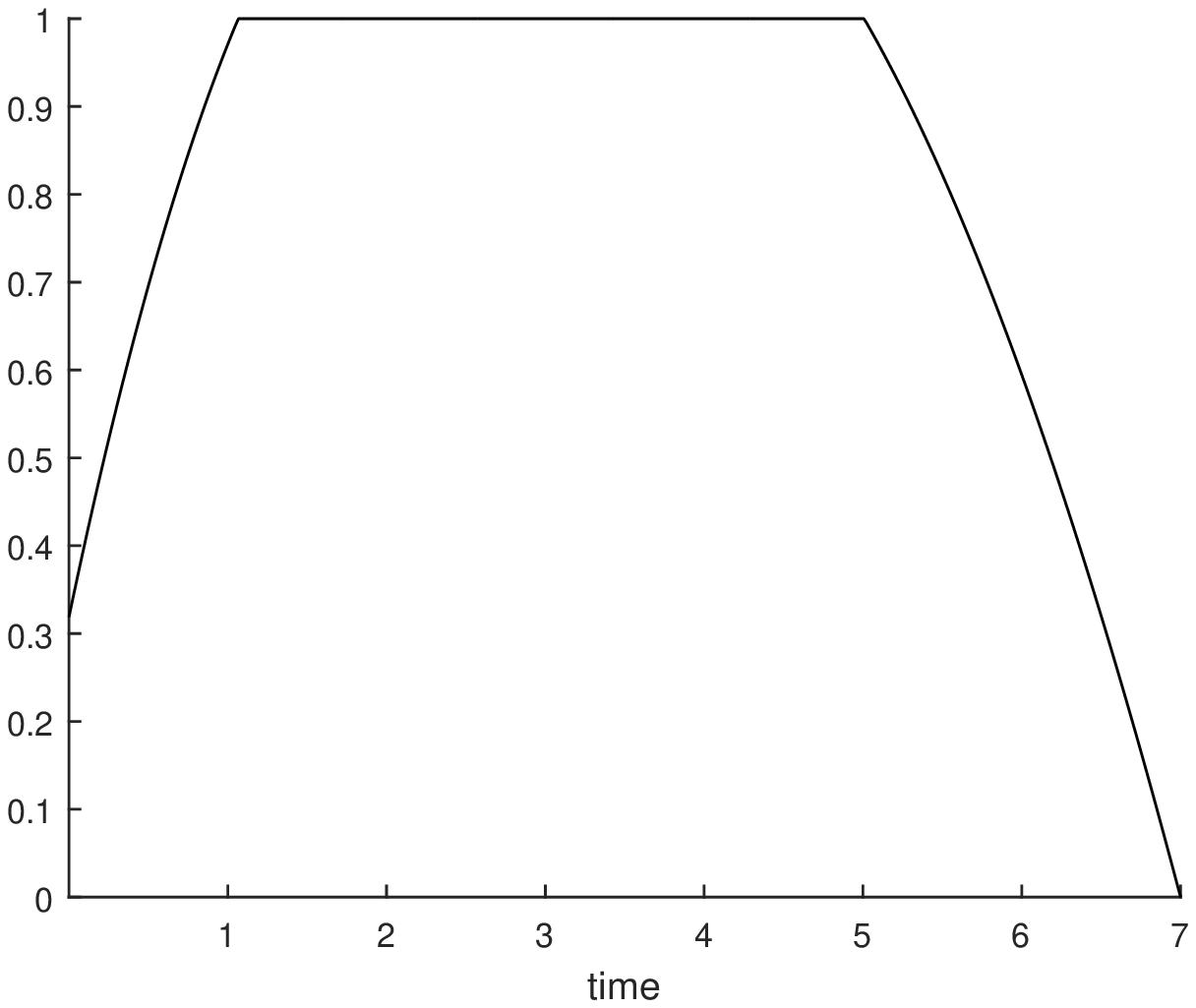}
                \caption{Variation of the optimal control $u_2$. }
                \label{fig:u2_beta1}
        \end{subfigure}\\
         \caption{Optimal control and state variables, with control and without control, of the marketing model,  time-varying rate $\beta_1$ and time-varying rate $\gamma_1$. \emph{Top row:} (A) regular customers C and potential customers P, (B) referral customers R. \emph{Bottom row:} (C) optimal control $u_1$, (D) optimal control $u_2$.}
        \label{fig:OCP_beta1}
\end{figure}

\begin{figure}[!htb]
        \centering
        \hspace*{-2cm}
        \begin{subfigure}[b]{0.48\textwidth}\centering
                \includegraphics[scale=0.39]{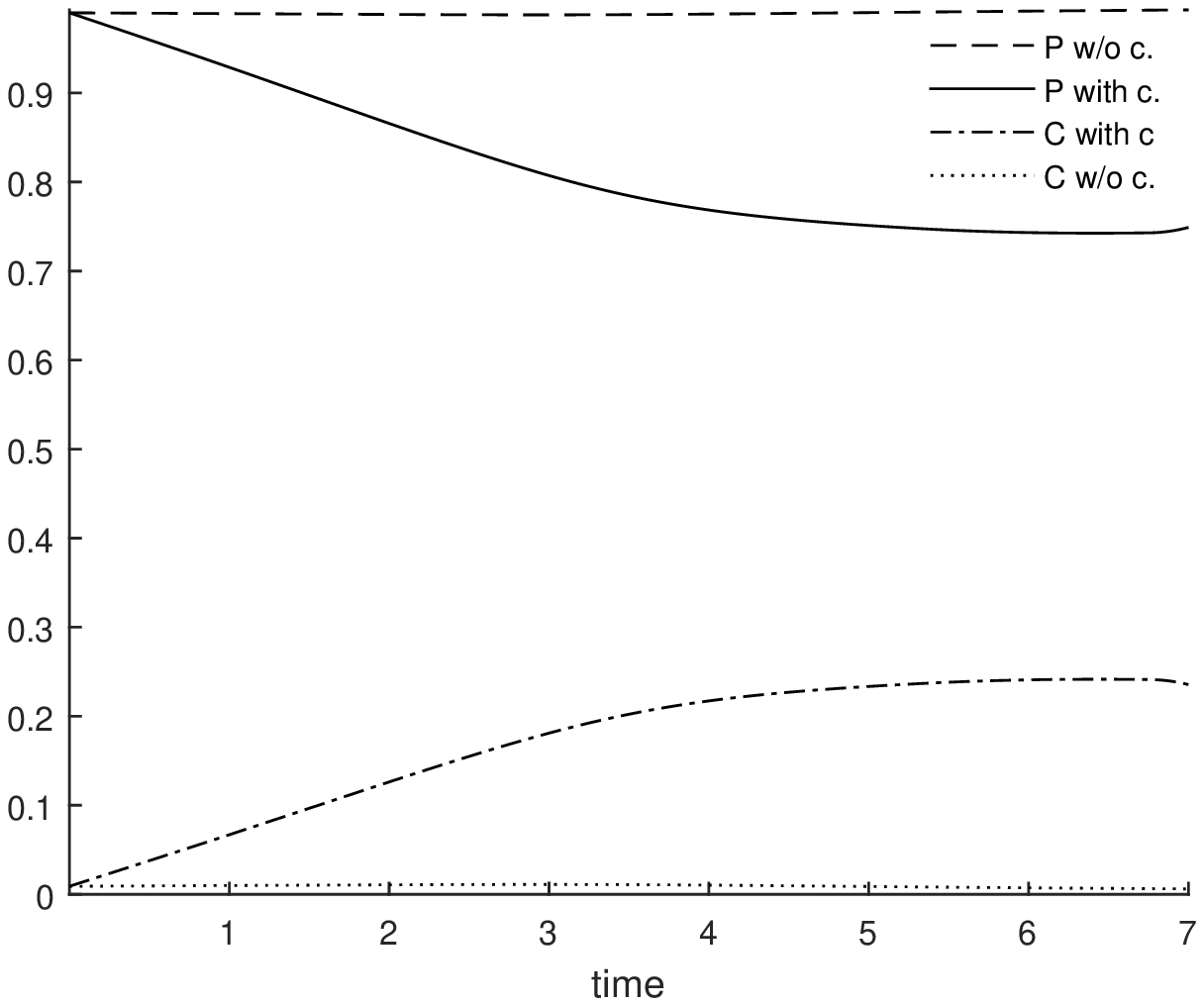}
                \caption{Evolution of $C$ and $P$. }
                \label{fig:PC_beta2}
        \end{subfigure}
        \hspace*{1cm}
        \begin{subfigure}[b]{0.48\textwidth}\centering
                \includegraphics[scale=0.39]{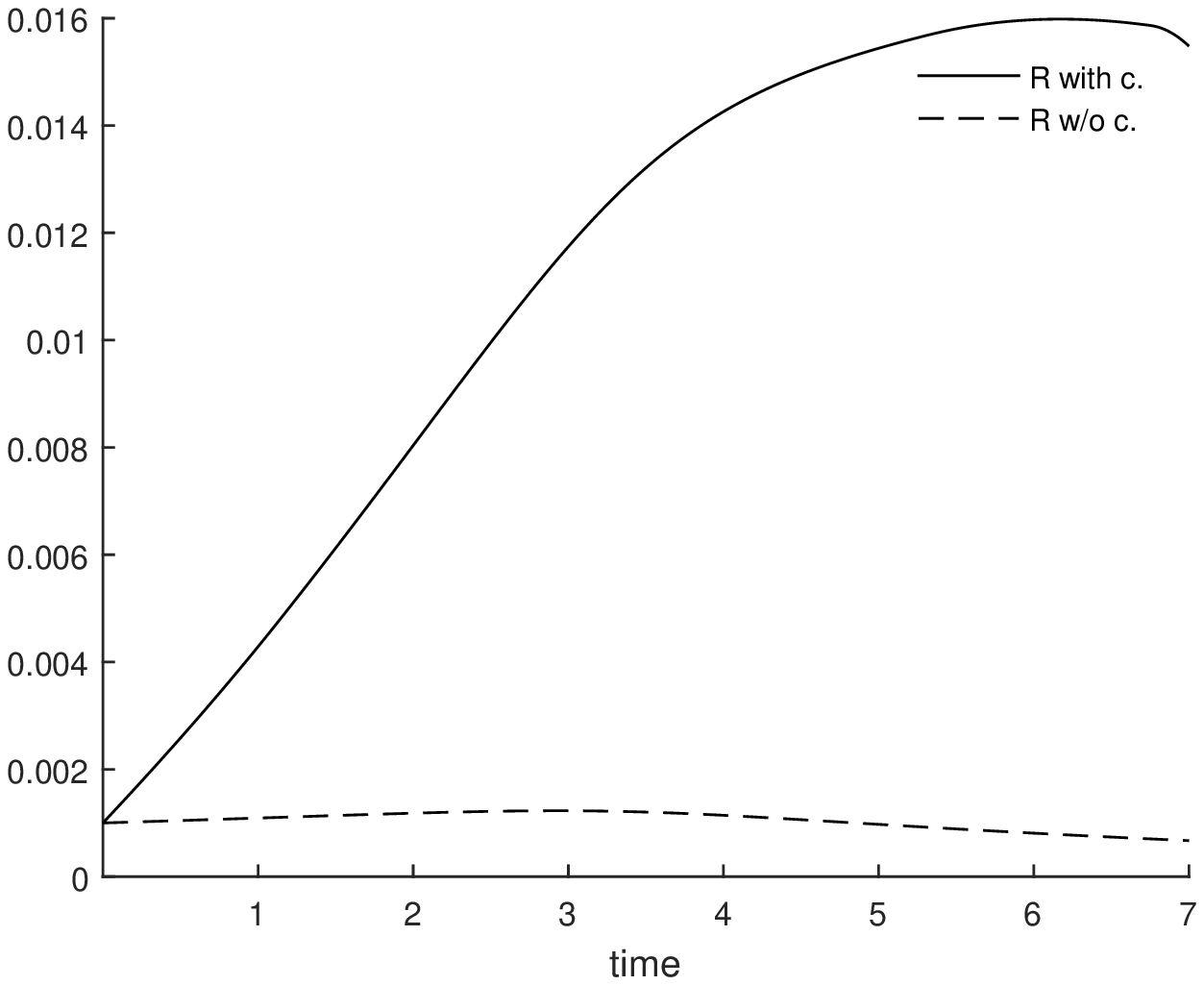}
                \caption{Evolution of $R$. }
                \label{fig:R_beta2}
        \end{subfigure}\\ \centering  \hspace*{-2cm}
        \begin{subfigure}[b]{0.48\textwidth}\centering
                \includegraphics[scale=0.39]{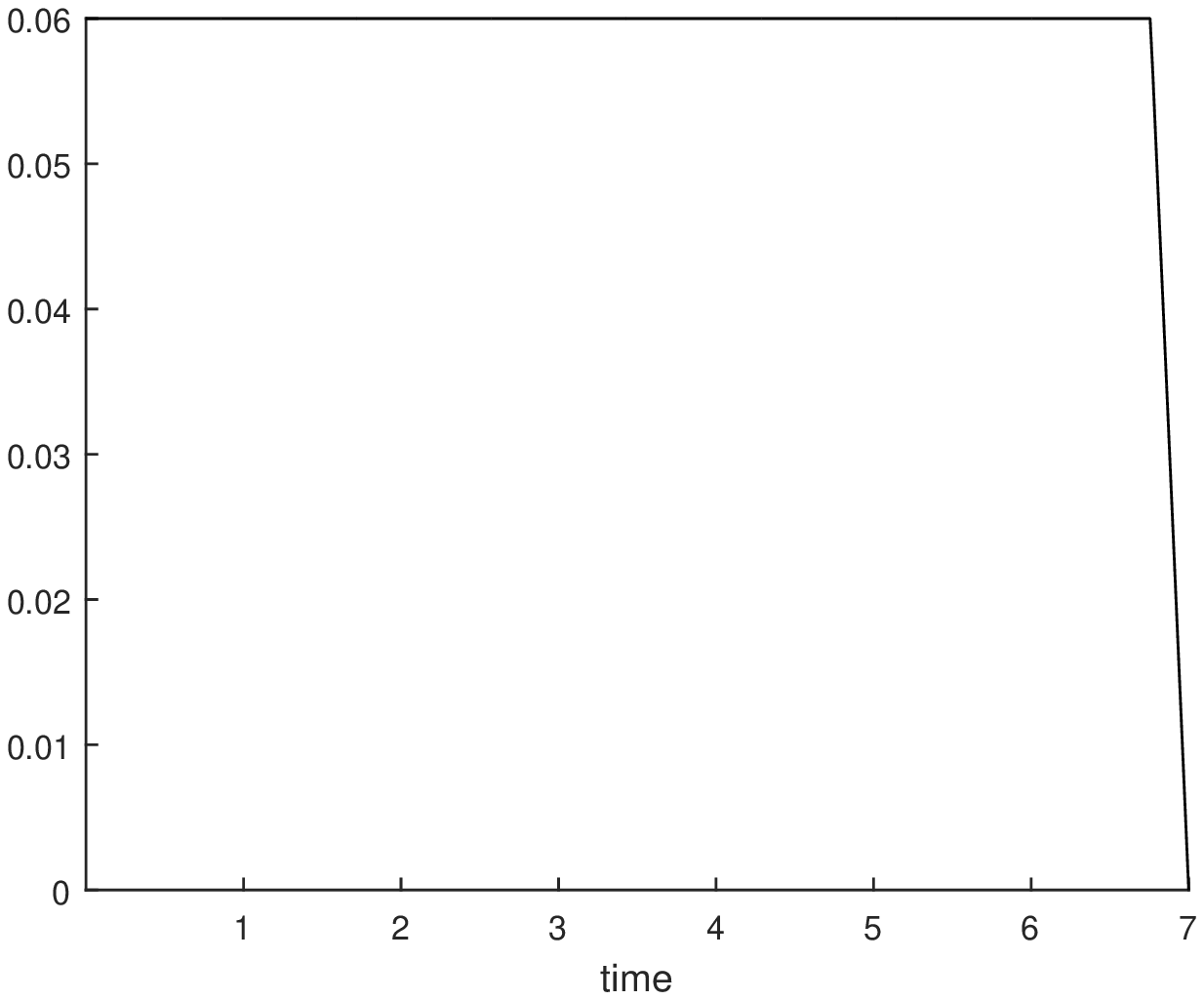}
                \caption{Variation of the optimal control $u_1$. }
                \label{fig:u1_beta2}
        \end{subfigure}
        \hspace*{1cm}
        \begin{subfigure}[b]{0.48\textwidth}\centering
                \includegraphics[scale=0.39]{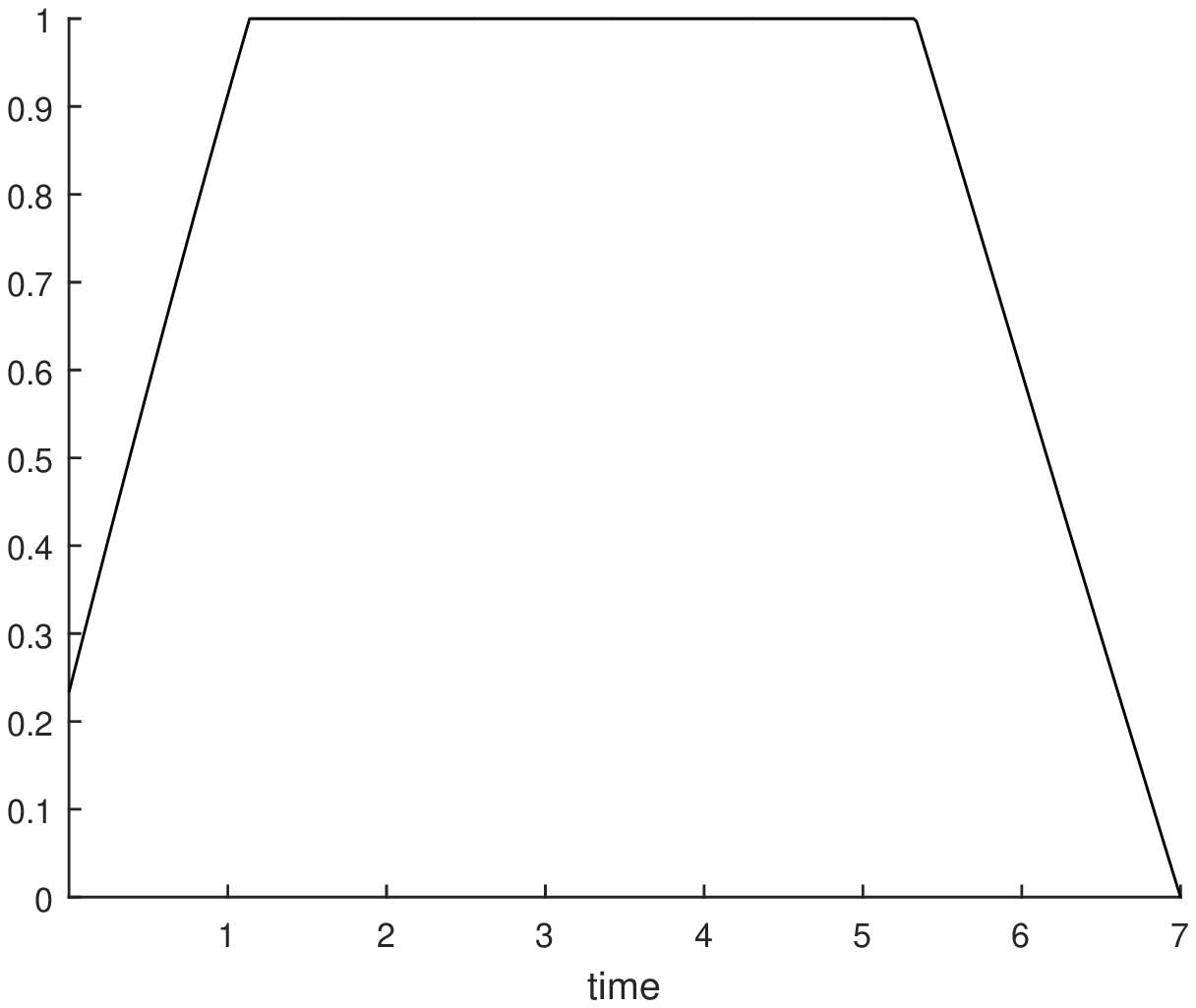}
                \caption{Variation of the optimal control $u_2$. }
                \label{fig:u2_beta2}
        \end{subfigure}\\
         \caption{Optimal control and state variables, with control and without control, of the marketing model,  time-varying rate $\beta_2$ and time-varying rate $\gamma_2$. \emph{Top row:} (A) regular customers C and potential customers P, (B) referral customers R. \emph{Bottom row:} (C) optimal control $u_1$, (D) optimal control $u_2$.}
        \label{fig:OCP_beta2}
\end{figure}

\begin{figure}[!htb]
        \centering
        \hspace*{-2cm}
        \begin{subfigure}[b]{0.48\textwidth}\centering
                \includegraphics[scale=0.39]{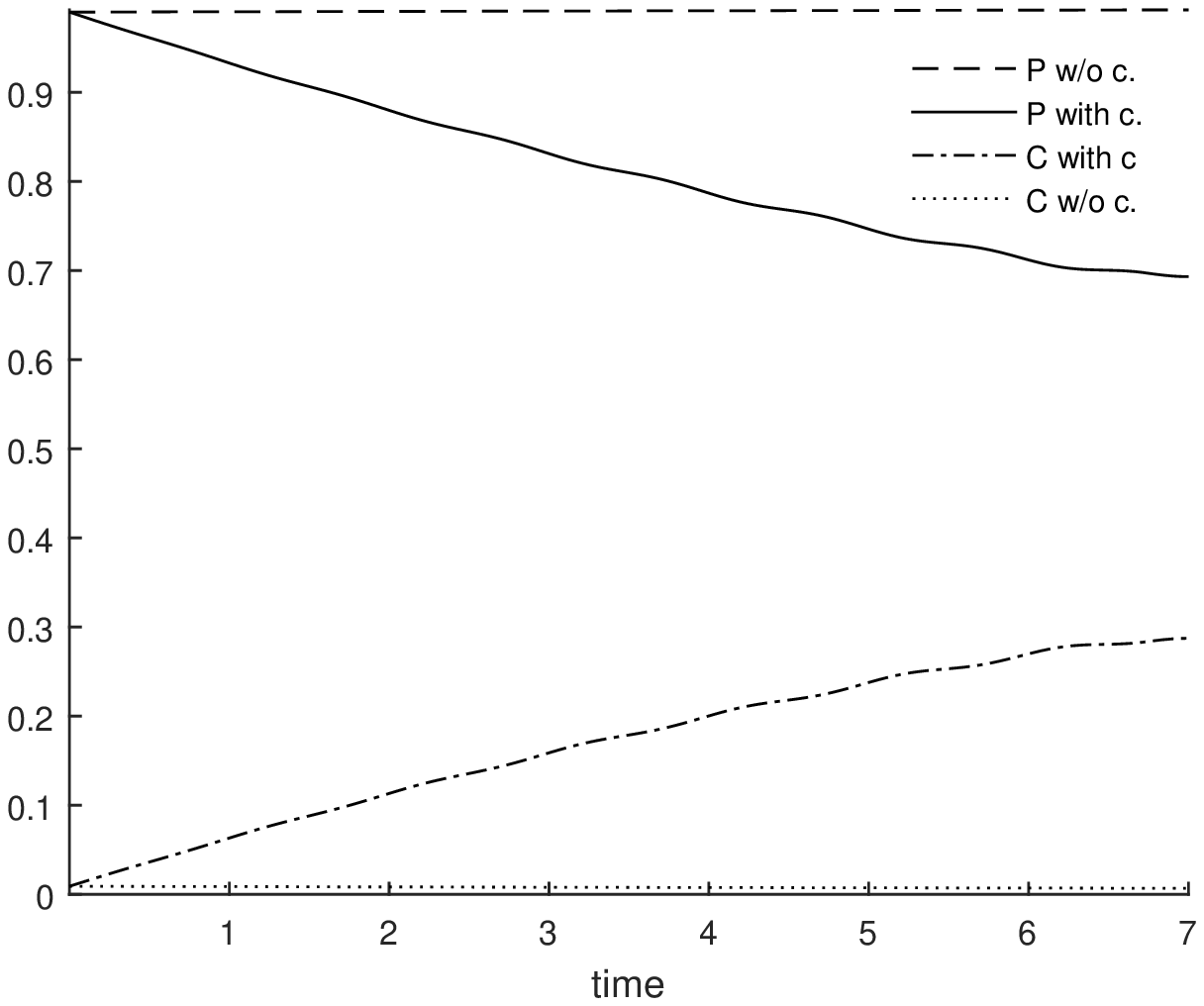}
                \caption{Evolution of $C$ and $P$. }
                \label{fig:PC_beta3}
        \end{subfigure}
        \hspace*{1cm}
        \begin{subfigure}[b]{0.48\textwidth}\centering
                \includegraphics[scale=0.39]{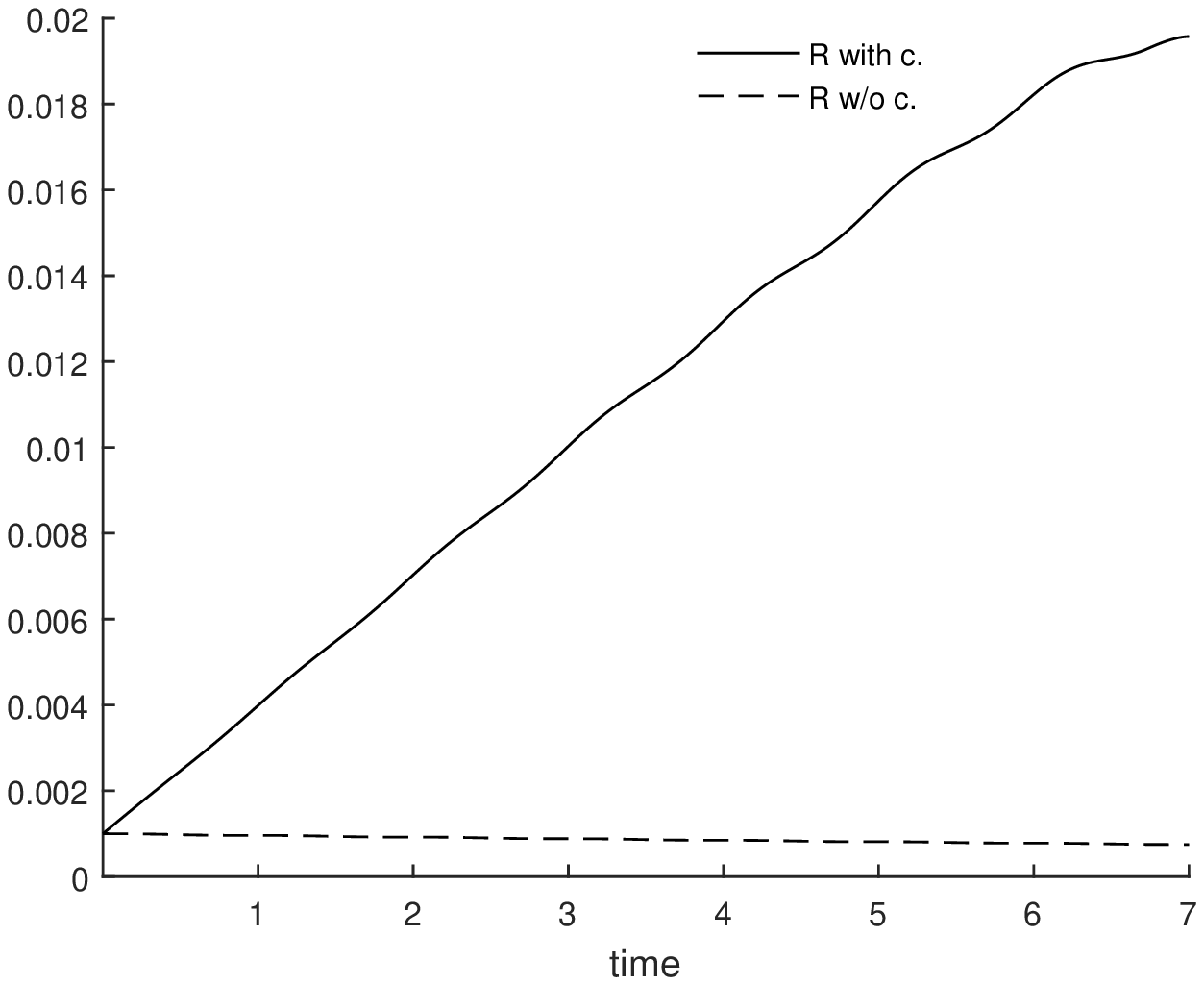}
                \caption{Evolution of $R$. }
                \label{fig:R_beta3}
        \end{subfigure}\\ \centering  \hspace*{-2cm}
        \begin{subfigure}[b]{0.48\textwidth}\centering
                \includegraphics[scale=0.39]{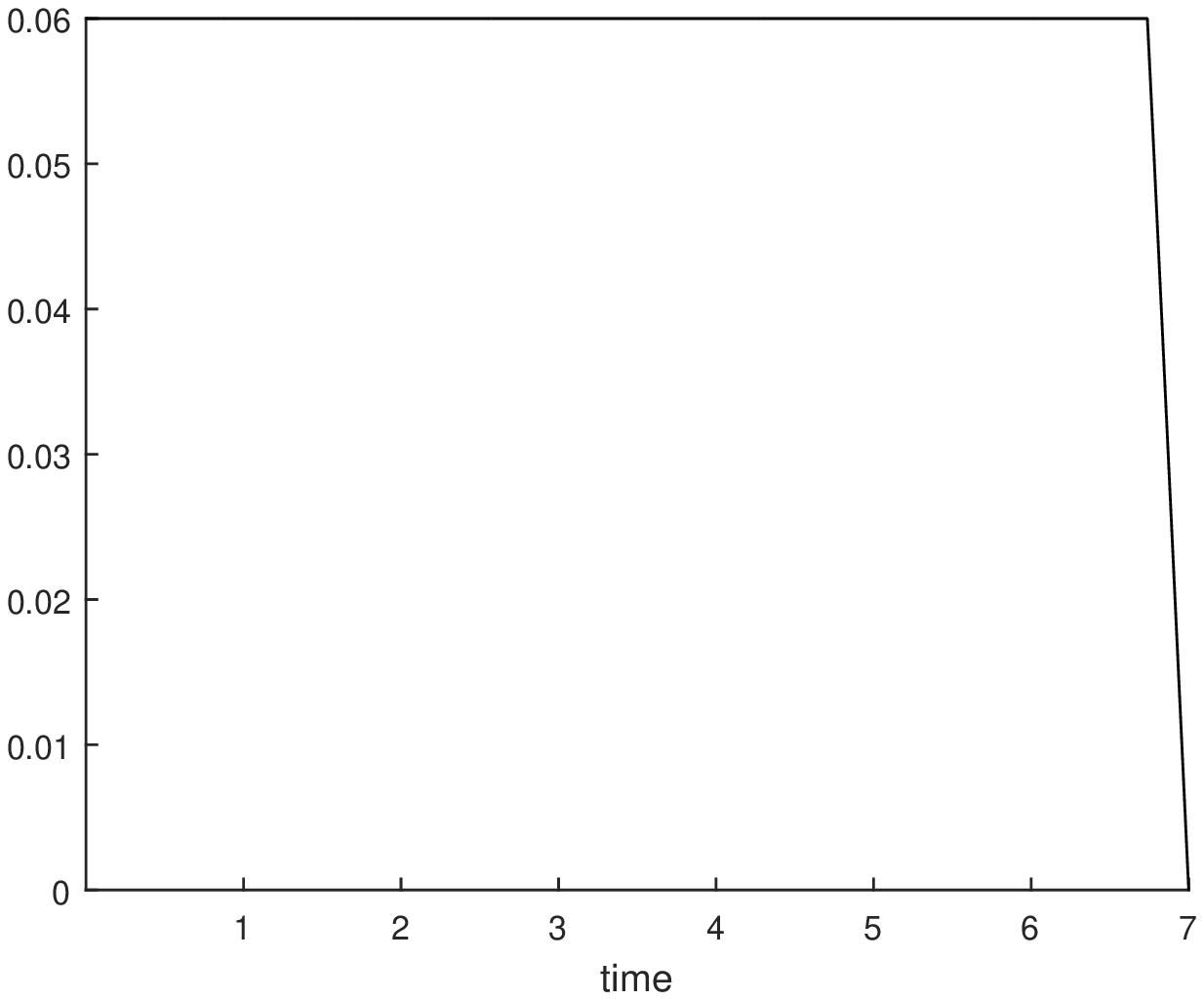}
                \caption{Variation of optimal control $u_1$. }
                \label{fig:u1_beta3}
        \end{subfigure}
        \hspace*{1cm}
        \begin{subfigure}[b]{0.48\textwidth}\centering
                \includegraphics[scale=0.39]{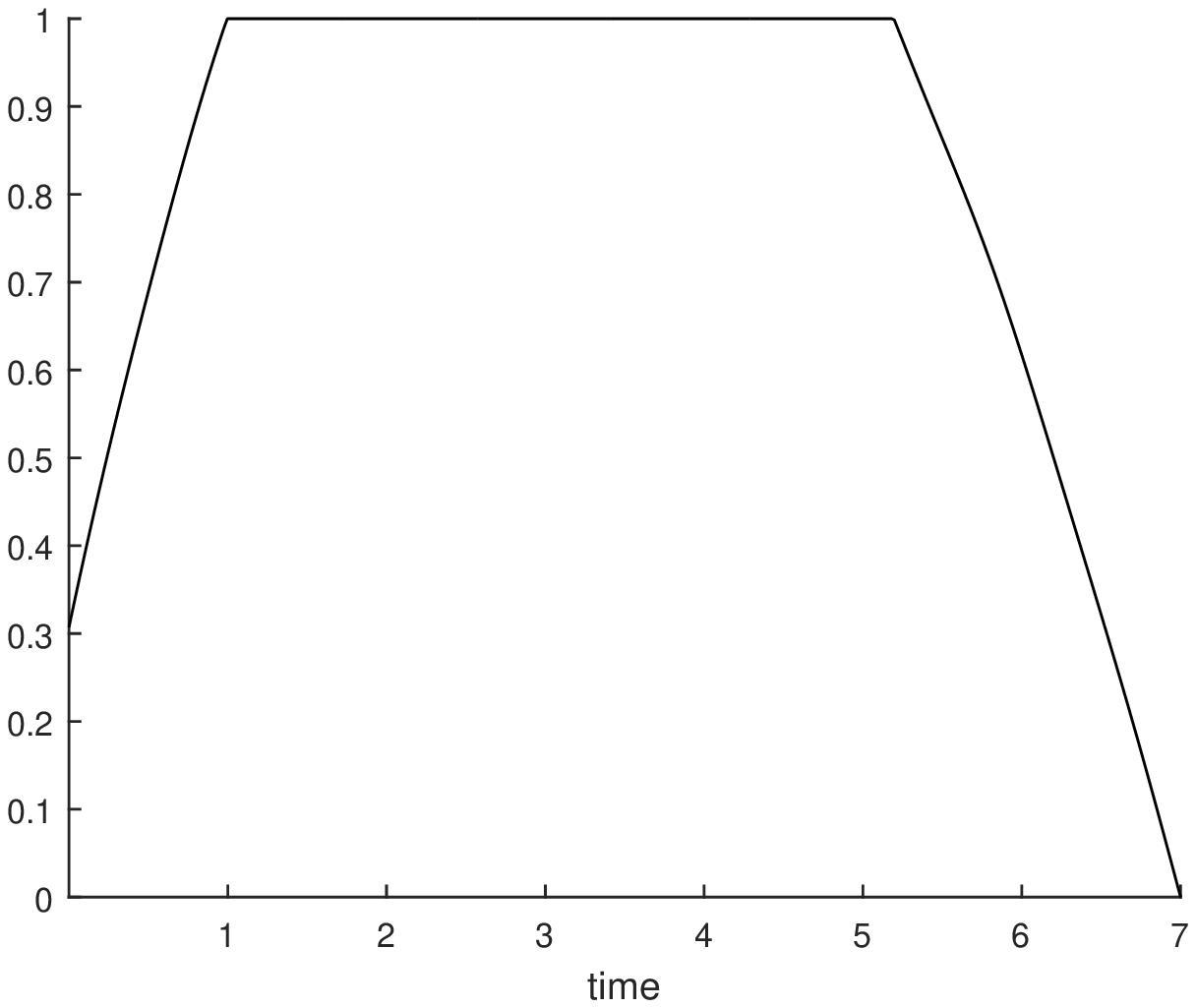}
                \caption{Variation of optimal control $u_2$. }
                \label{fig:u2_beta3}
        \end{subfigure}\\
         \caption{Optimal control and state variables, with control and without control, of the marketing model,  time-varying rate $\beta_3$ and time-varying rate $\gamma_3$. \emph{Top row:} (A) regular customers C and potential customers P, (B) referral customers R. \emph{Bottom row:} (C) optimal control $u_1$, (D) optimal control $u_2$.}
        \label{fig:OCP_beta3}
\end{figure}

\begin{figure}[!htb]
        \centering
        \hspace*{-2cm}
        \begin{subfigure}[b]{0.48\textwidth}\centering
                \includegraphics[scale=0.39]{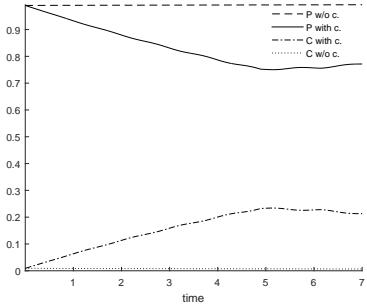}
                \caption{Evolution of $C$ and $P$. }
                \label{fig:PC_beta3_L1}
        \end{subfigure}
        \hspace*{1cm}
        \begin{subfigure}[b]{0.48\textwidth}\centering
                \includegraphics[scale=0.39]{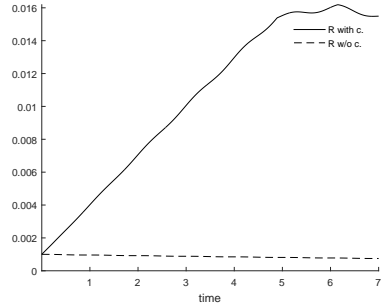}
                \caption{Evolution of $R$. }
                \label{fig:R_beta3_L1}
        \end{subfigure}\\ \centering  \hspace*{-2cm}
        \begin{subfigure}[b]{0.48\textwidth}\centering
                \includegraphics[scale=0.39]{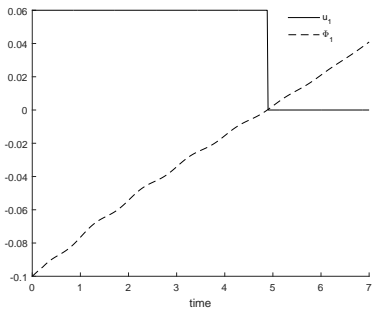}
                \caption{Control $u_1$ and scaled switching function $\Phi_1$. }
                \label{fig:u1_beta3_L1}
        \end{subfigure}
        \hspace*{1cm}
        \begin{subfigure}[b]{0.48\textwidth}\centering
                \includegraphics[scale=0.39]{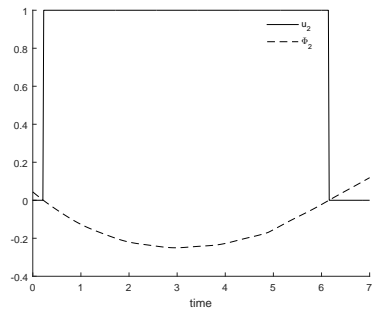}
                \caption{Control $u_2$ and scaled switching function $\Phi_2$. }
                \label{fig:u2_beta3_L1}
        \end{subfigure}
         \caption{Optimal control and state variables, with control and without control, of the marketing model with $L^1$ objective,  time-varying rate $\beta_3$ and time-varying rate $\gamma_3$. \emph{Top row:} (A) regular customers C and potential customers P, (B) referral customers R. \emph{Bottom row:} (C) control $u_1$ and scaled switching function $\Phi_1$, (D) control $u_2$ and scaled switching function $\Phi_2$.}
        \label{fig:OCP_beta3_L1}
\end{figure}

\begin{figure}[!htb]
        \centering
        \hspace*{-1cm}
        \begin{subfigure}[b]{0.48\textwidth}\centering
                \includegraphics[scale=0.39]{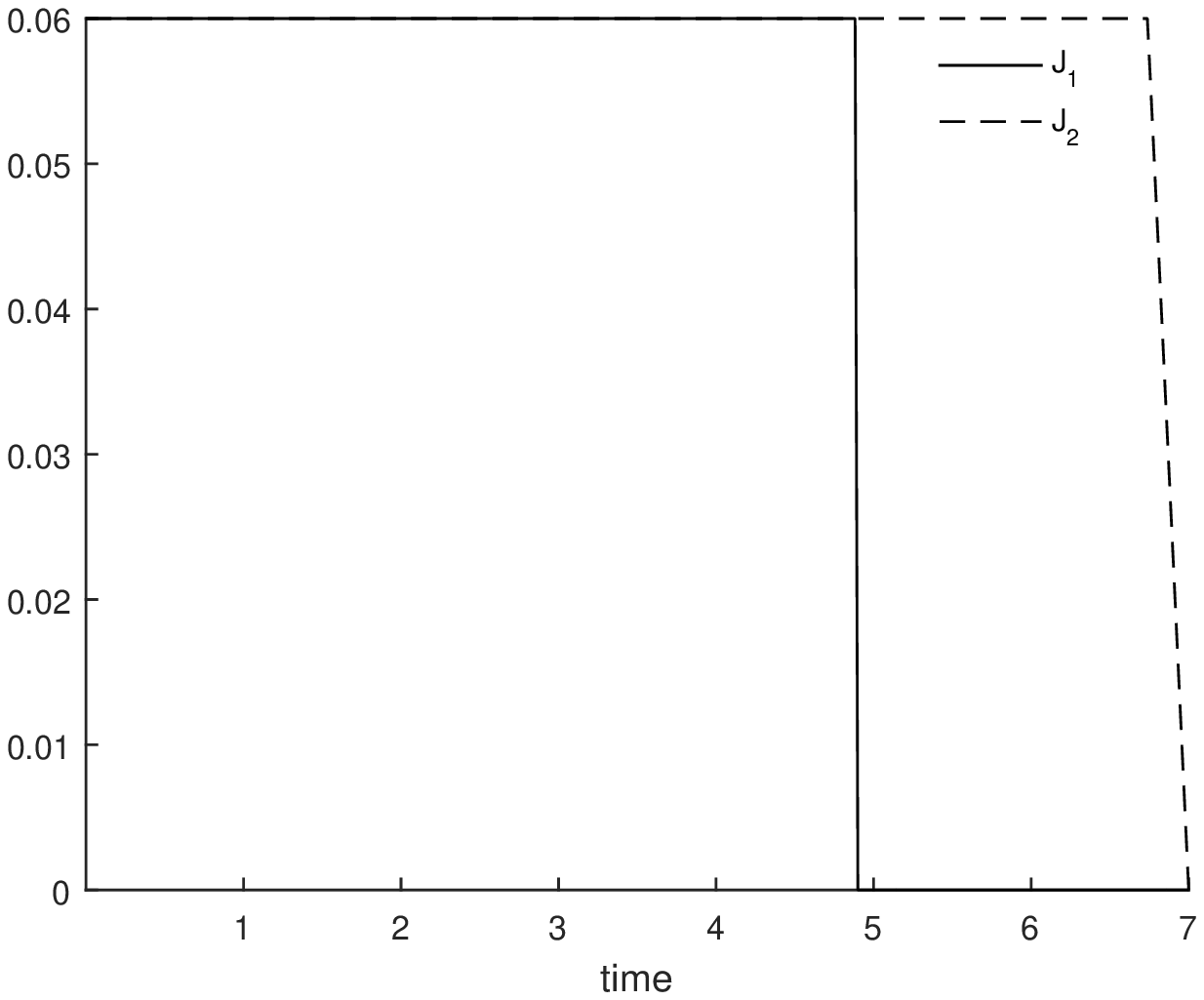}
                \caption{Control $u_1$ for $J_1$ and $J_2$ objectives.}
                \label{fig:compara_u1_beta3}
        \end{subfigure}
        \hspace*{1cm}
        \begin{subfigure}[b]{0.48\textwidth}\centering
                \includegraphics[scale=0.39]{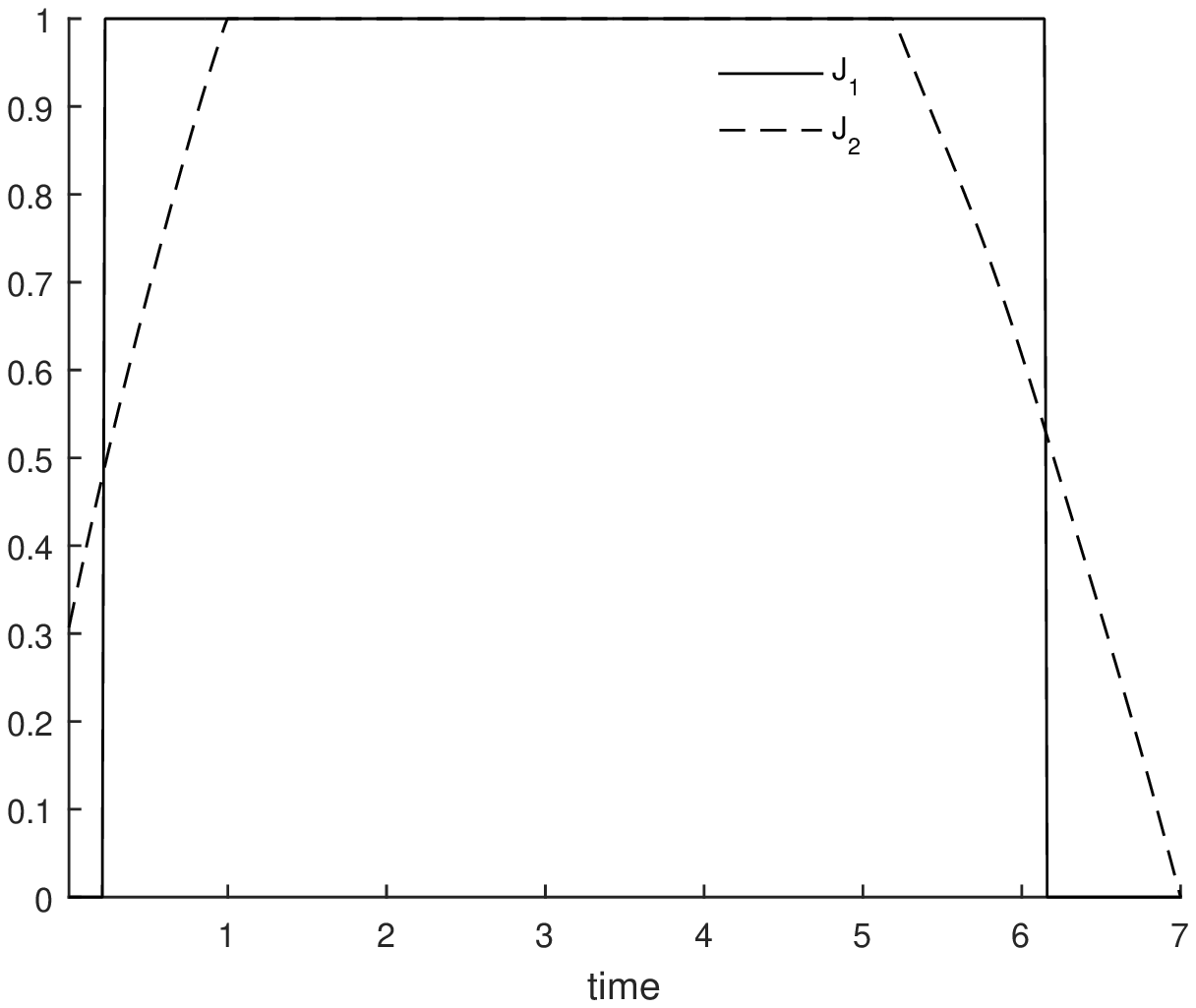}
                 \caption{Control $u_2$ for $J_1$ and $J_2$ objectives.}
                \label{fig:compara_u2_beta3}
        \end{subfigure}\\
         \caption{Comparison of controls $u_1$ and $u_2$ for the $L^1$-type objective ($J_1$) and $L^2$-type objective ($J_2$) with rates $\beta_3$ and $\gamma_3$.}
        \label{fig:compara_controlos_beta3}
\end{figure}

The  optimal control problem is numerically solved using a Runge-Kutta fourth order iterative method. First we solve the system (\ref{eq:modelo})-(\ref{eq:cond_ini}), by the forward Runge-Kutta fourth order procedure, and obtain the values of the state variables ($C$, $R$ and $P$). Using those values, then we solve the system (\ref{eq:Pontryagin-CPR-Mayer-1}) with the transversality conditions (\ref{eq:Pontryagin-CPR-Mayer-5}), by  backward fourth order Runge-Kutta procedure, and obtain the values of the co-state variables.  The controls are updated by a convex combination of the
previous values and the new values computed according with (\ref{eq:Pontryagin-CPR-Mayer-6})-(\ref{eq:Pontryagin-CPR-Mayer-7}). The iteration is stopped when the values of the unknowns at the earlier
iteration are very close to the ones at the current iteration.

In what follows, we assume that the maximum rate of direct recruitment of individuals from the population is ${u_1}_{\max}=0.06$ (cf. \cite{Kandhway2014}). The word-of-mouth control is potentiated by the referrals and is expected that all of them may act as spreaders, so ${u_2}_{\max}=1.0$. The terminal time is $t_f=7$ time units and the remaining  parameters are fixed according to Table \ref{tab:ref}. The initial conditions are the following:
$$C_0=0.009, R_0=0.001\mbox{ and } P_0=0.99$$ 
We consider that the weight values are $\kappa_1=1$, $\kappa_2=1.5$ and $\kappa_3=0.01$.

 We study next the optimal control problem with time dependent rates  $\beta(t)$ and $\gamma(t)$. After, in Section \ref{sub:effectiveness}, we analyse the effect of some parameters ($\gamma$, $\kappa_2$, $\beta$ and $t_f$) on the cost functional $J$ of the proposed model.

\subsection{Variable recruitment rate  $\beta(t)$ and variable defection rate $\gamma(t)$.}

  Inspired in \cite{Kandhway2014}, to model the varying interest of a population in recruit new customers during the campaign duration, we consider three different functions $\beta_1(t)$, $\beta_2(t)$ and  $\beta_3(t)$. They model the cases of increasing, decreasing and fluctuating  interest as the action of the  referral customers develops, respectively. The respective functions, exhibited in Figure \ref{fig:betas:3}, are defined as:
\begin{align*}
&\beta_1(t) = 0.01+\frac{0.99}{1+e^{-2t+8}},\\[1mm]
&\beta_2(t) = 0.01 + 0.99\left(1-\frac{1}{1+e^{-2t+6}}\right),\\[1mm]
&\beta_3(t)= 0.01+0.49\left(1 - \cos( 2 \pi t + 0.26)\right),
\end{align*}
The increasing recruiting rate, $\beta_1(t)$, may represent the increasing interest of people by election candidates as we approach the polling date. The decreasing recruiting rate, $\beta_2(t)$, may represent gradual loss of interest of people in some product after its release (e.g a newly launched smartphone). Fluctuating recruiting rate, $\beta_3(t)$, may represent changes in demand of a product with time (e.g seasonal products that have great demand in a given season  but little demand during the rest of the year).

Three distinct scenarios are also proposed to the defection rate to complement the three cases proposed to the recruitment rate. In the first scenario we propose that the defection rate is invariant. In second we suggest that the decreasing recruiting rate is followed by an increasing defection rate. In last scenario we propose that the oscillating interest is accompanied by an also oscillating defection rate. The $\gamma_i\parc{t}$  functions for $i=1,2,3$, exhibited in Figure \ref{fig:gamas:3},  are as follows, \vspace*{-0.2cm}
\begin{align*}
&\gamma_1(t)=\gamma_0,\\
&\gamma_2(t) = 0.01+\frac{0.18}{1+e^{-2t+7}},\\
&\gamma_3(t)= \gamma_0\left(1- 0.9 \cos( 2 \pi t +0.26)\right),\label{gamma_fun} \end{align*}~where $\gamma_0=0.10.$

In the case of the increasing interest of a population in recruiting new customers, during the campaign duration (rates $\beta_1$ and $\gamma_1$),  the solution for the optimal control problem and solution to the no control problem is illustrated in Figure \ref{fig:OCP_beta1}. In what concerns the optimal solution, the number of referral customers grows vigorously and reaches its maximum,  close to 0.02, almost at the terminal time. We also notice that the number of customers, referral and regular, evidence a very light decrease when approaching the end of time interval.  On the other hand, the number of referral and regular costumers, of the no control solution, are lower than the ones of the optimal solution. In the solution of the no control problem, of the two remaining scenarios, the number of costumers is also lower than the one of optimal solution (see \Cref{fig:OCP_beta2,fig:OCP_beta3,fig:OCP_beta3_L1}). 

In Figures \ref{fig:u1_beta1}, \ref{fig:u2_beta1}, while the first control, $u_1$, is maximum in almost all the time interval, second control, $u_2$, is maximum on a central part of the interval since $t_1$ (close to 1) up to $t_2$ (between 5 and 6). The controls we obtain for the following two cases, displayed in Figures \ref{fig:u1_beta2},\ref{fig:u2_beta2},\ref{fig:u1_beta3},\ref{fig:u2_beta3}, are analogous to these ones.

In the case of the decreasing interest of a population in recruiting new customers, during the campaign duration (rates $\beta_2$ and $\gamma_2$),  the solution for the optimal control problem and solution of no control problem are exhibited in Figure \ref{fig:OCP_beta2}. Relatively to the optimal solution, the evolution of the number of referrals also grows vigorously in the beginning, but slow down in the second half of time interval. When approaching terminal time, the number of customers, referral and regular, exhibit a reduction  bigger than the preceding case. This behaviour  is motivated by the recruitment rate $\beta_2$.


In the case of the periodic interest of a population in recruiting new customers, during the campaign duration,  the solution for the optimal control problem  with the $L^2$ objective and the solution of the no control problem are displayed in Figure \ref{fig:OCP_beta3}. The periodic nature of the parameters $\beta_3$ and $\gamma_3$ influences the evolution of the three state variables. Relatively to the optimal solution, the variation of the number of customers, referral and regular, is, in general, similar to the first case.

The optimal control problem with $L^1$ objective functional, presented in \ref{appendixA}, was also considered in case where the interest in recruiting new customers is periodic (third case). The optimal solution,  obtained analogously with the  Runge-Kutta scheme presented above, and the no control solution are presented in Figure \ref{fig:OCP_beta3_L1}. Relatively to such optimal solution, the customers, referral and regular, stop growing when the first control, $u_1$, becomes inactive and their maximums are smaller than those that were obtained with the quadratic objective. It can also be observed that the switching functions satisfy the strict bang-bang property (cf. \cite{Maurer2012}) associated to the Pontryagin Maximum Principle. 

Figure \ref{fig:compara_controlos_beta3}  compares  the optimal controls for the linear functional, $J_1$, with the quadratic functional, $J_2$. The first control variable $u_1$ differ on a terminal interval $t_i<t<t_f$ where we see that the $J_1$ control is inactive while the $J_2$ control is active (maximum). The second control $u_2$ shows also differences for the two functionals at beginning and at end of time interval. In Figure \ref{fig:compara_u2_beta3} we can see that these differences are somehow compensated. Like in other analogous works   where the upper bound equals the value one and the graphics of control solutions are similar (e.g \cite{preprint:Delfim2016}), the optimal state variables of the two functionals are almost identical.   

\subsection{Comparison of optimal control with simpler controls.}\label{sub:effectiveness}

The recruiting rate, $\beta$, and the defection rate, $\gamma$, are constant in this section. The goal of this section is to compare the effectiveness of optimal control strategy with other simpler control strategies that do not require any optimization technique. 

As in \cite{Kandhway2014}, we compare the optimal control problem with three more problems. Namely: 1) the problem without control (controls are zero); 2) problem where controls are constant with $u_1'(t)=(1-\alpha_1){u_1}_{\max}/2$ and  $u_2'(t)=\alpha_2{u_2}_{\max}/2$\footnotemark[1] ; 3) problem with heuristic controls, know as \emph{follow $P_{nc}(t)$, $P_{nc}(t)R_{nc}(t)$} (see \cite{Kandhway2014}), where controls are $u_1''(t)=(1-\alpha_1){u_1}_{\max} P_{nc}(t)$ and  $u_2''(t)=\alpha_2{u_2}_{\max}P_{nc}(t)R_{nc}(t)$\footnotemark[1],  being $P_{nc}(t)$ and $R_{nc}(t)$ the fractions of potential customers and referral customers, respectively, when no control is applied.
\footnotetext[1]{Since $u_2(t)$, in most cases, has rather low values, we multiply ${u_2}_{\max}$ by  an small constant, $\alpha_2$.}

 In order to compare the optimal strategy with the remaining strategies, using  ranges of values for parameters $\gamma$, $\beta$, $t_f$ and $\kappa_2$ similar to \cite{Kandhway2014}, we use by default, in what follows, the weight values: $\kappa_1=1/t_f$, $\kappa_2=15$ and $\kappa_3=1$.


\begin{figure}[!htb]
        \centering
       \hspace*{-1cm}
        \begin{subfigure}[b]{0.48\textwidth}\centering
                \includegraphics[scale=0.39]{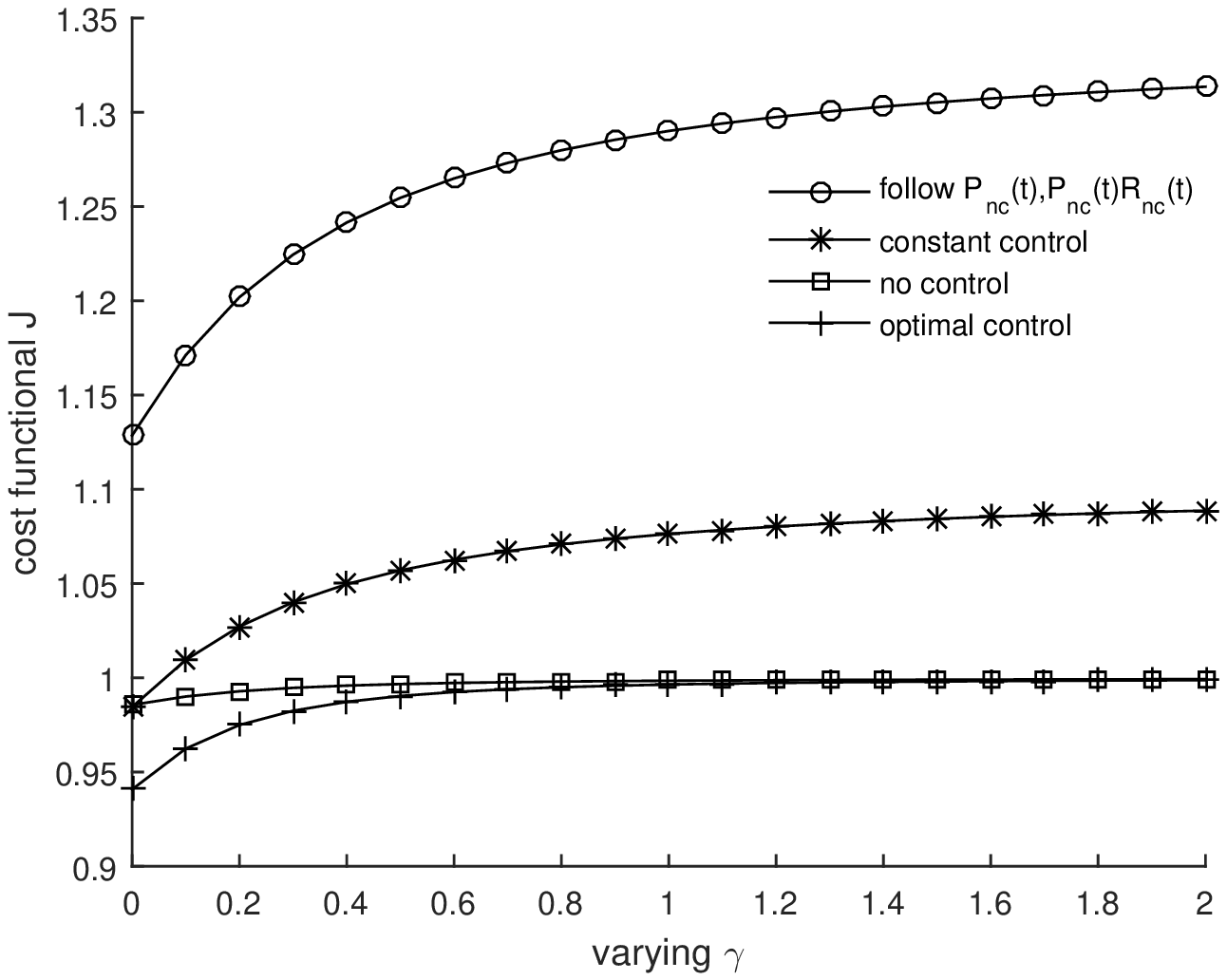}
                \caption{}
                \label{fig:effectiveness_gamma}
        \end{subfigure}
        \hspace*{1cm}
        \begin{subfigure}[b]{0.48\textwidth}\centering
                \includegraphics[scale=0.39]{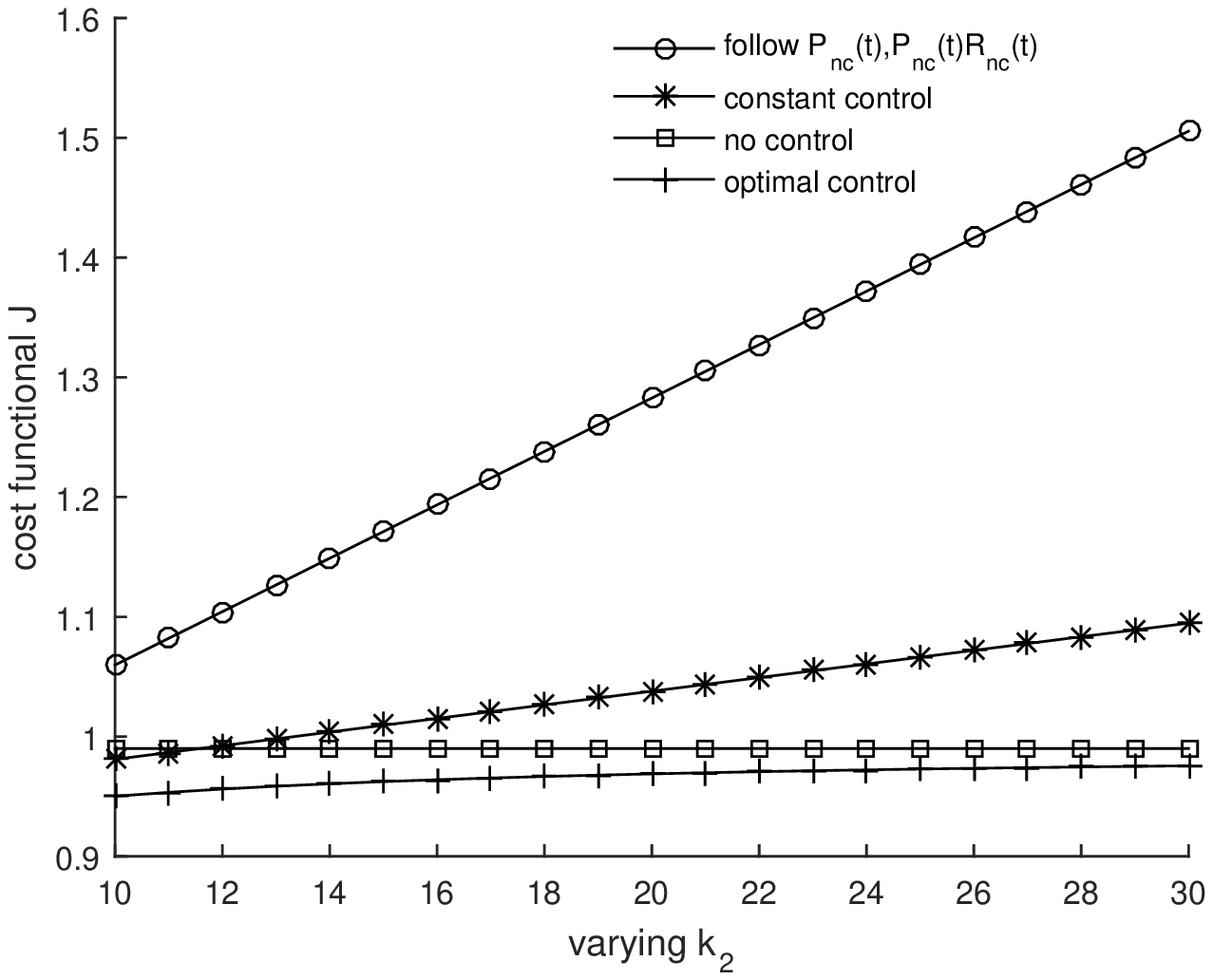}
                 \caption{}
                \label{fig:effectiveness_k2}
        \end{subfigure}\\
         \caption{Evolution of the value of the objective functional $J$ with: (A) variation of the defection rate $\gamma$ with $\kappa_2=15$, (B) variation of the weight parameter $\kappa_2$ with $\gamma=0.1$. Parameter values: $\beta=1$, $t_f=7$.}
        \label{fig:effectiveness}
\end{figure}

In Figure \ref{fig:effectiveness_gamma} we display the evolution of the cost function, for  the four problems (or strategies) above mentioned, with variation of the defection rate $\gamma$. The cost functional $J$ for the constant control strategy and \emph{follow $P_{nc}(t)$, $P_{nc}(t)R_{nc}(t)$} strategy is bigger than $J$ for no control. On the other hand, $J$ for the optimal control strategy is smaller than $J$ for no control strategy for $\gamma$ lower than 1.0, but these $J$s coincide when $\gamma$ is greater or equal to 1.0. We may conclude that when $\gamma\geqslant 1$ the optimal control is ineffective and there is no need of any marketing campaign.

In Figure \ref{fig:effectiveness_k2} we display the evolution of the cost function with variation of the weight parameter $\kappa_2$. The cost functional $J$ for the constant control strategy and \emph{follow $P_{nc}(t)$, $P_{nc}(t)R_{nc}(t)$} strategy are bigger than the $J$ for the optimal control. We can also see that $J$ for the no control strategy is bigger than $J$ for the optimal control strategy when $\kappa_2$ is small, but these strategies tend to have the same values  of $J$ as we consider bigger values for $\kappa_2$. 

We also compared the evolution of the cost function of the optimal control model with the other three strategies for the variation of $\beta$ ($\beta\in[0,3]$) and $t_f$ ($t_f\in[4,14]$). The resulting figures are not displayed because, in these cases, the optimal control strategy was the one with smaller values of cost function and no tendency to approach to one of the other strategies was exhibited. Hence, in these cases the optimal control is recommended.


\section{Conclusions}
In this paper we have considered an optimal control problem for a nonlinear system of ordinary differential equations that describes the evolution of the number of regular customers and referral customers in some firm. The aim is to study, considering several types of behaviour for the population, the best marketing strategy in the decision to invest in referrals programs.

 The existence and uniqueness of optimal solutions was established for an $L^2$ cost functional model. Some simulation results of such model were presented  and compared with  the ones obtained for the model with an $L^1$ cost functional. The optimal solutions for the problem with linear lagrangian  are of bang-bang type.

 While performing the numerical simulations, we have noticed that, for some values of the cost weights $\kappa_1$, $\kappa_2$ and $\kappa_3$, the solutions for the quadratic objective model are slightly better than the ones for the linear objective model.  Nevertheless, the strategy obtained for  the linear objective model is easier to implement, since at each time interval the possible actions are taken from a finite set of possibilities, and thus may be more appealing to the marketing managers. 

For the autonomous case of quadratic cost functional model, we have shown the  effectiveness of the optimal control strategy over the constant control strategy, a heuristic control strategy and the no control. 

\appendix
\section{Proof of Theorem \ref{teo:uniqueness}.}\label{appendix_proof}
\begin{proof}
We assume that we have two optimality systems corresponding to trajectories and state equations $(R,C,P)$, $(p_1,p_2,p_3)$ and $(\bar R,\bar C,\bar P)$, $(\bar p_1,\bar p_2,\bar p_3)$ and we will show that the two coincide in some small interval. Consider the change of variables
$$R(t)=e^{\theta t}r(t), \quad C(t)=e^{\theta t}c(t), \quad P(t)=e^{\theta t}q(t)$$
and
$$p_1(t)=e^{-\theta t}\phi_1(t), \quad p_2(t)=e^{-\theta t}\phi_2(t), \quad p_3(t)=e^{-\theta t}\phi_3(t).$$

Recall that $N_0=R(t)+C(t)+P(t)$ is constant and that the region \linebreak $\{(P,C,R)\in (\R^+_0)^3: P+C+R=N_0\}$ is forward invariant.

By the first equation in~\eqref{eq:modelo} we get
$$\theta e^{\theta t}r+e^{\theta t} \dot{r}=-\lambda_2\e^{\theta t}r+\lambda_1\e^{\theta t}c-\beta_1\e^{\theta t}r +\alpha u_1\e^{\theta t}q+\alpha u_2\e^{2\theta t}rq/N$$
and thus
$$\theta r+ \dot{r}=-\lambda_2r+\lambda_1c-\beta_1r +\alpha u_1q+\alpha \e^{\theta t}u_2rq/N.$$
Subtracting the corresponding barred equation from the above equation we get
$$\theta (r-\bar r)+ \dot{r}-\dot{\bar r}=-(\lambda_2+\beta_1)(r-\bar r)+\lambda_1(c-\bar c)+\alpha (u_1q- \bar u_1\bar q)+\alpha \e^{\theta t}(u_2 r q - \bar u_2 \bar r \bar q)/N.$$
Multiplying by $(r-\bar{r})$, integrating from $0$ to $T$ and noting that $r(0)=\bar{r}(0)$ we have
\begin{equation}\notag 
\begin{split}
&\frac{1}{2}(r(T)-\bar r(T))^2 + \theta \int_0^T (r-\bar r)^2 dt \\
& =  - (\lambda_2+\beta_1) \int_0^T (r-\bar r)^2 dt + \lambda_1 \int_0^T (c-\bar c)(r-\bar r) dt \\
& \quad + \alpha  \int_0^T (u_1q- \bar u_1\bar q)(r-\bar r) dt + \frac{\alpha \e^{\theta T}}{N} \int_0^T (u_2 r q - \bar u_2 \bar r \bar q)(r-\bar r) dt
\end{split}
\end{equation}
and there are $C_1,C_2>0$ such that
\begin{equation}\label{eq:estimativa-necessaria-r}
\begin{split}
&\frac{1}{2}(r(T)-\bar r(T))^2 + \theta \int_0^T (r-\bar r)^2 dt \\
& =(\lambda_1/2+\alpha C_1+\alpha C_2 \e^{\theta T}/N-\lambda_2-\beta_1) \int_0^T (r-\bar r)^2 dt + \lambda_1/2 \int_0^T (c-\bar c)^2 dt \\
& \quad + \alpha C_1  \int_0^T (u_1- \bar u_1)^2 dt + \frac{\alpha C_1+\alpha C_2 \e^{\theta T}}{N} \int_0^T(q-\bar q)^2 dt \\
& \quad  + \frac{\alpha^2 C_1 C_2 \e^{\theta T}}{N} \int_0^T (u_2 - \bar u_2)^2 dt
\end{split}
\end{equation}

By the second equation in~\eqref{eq:modelo} we get
$$\theta e^{\theta t}c+e^{\theta t} \dot{c}=-\lambda_1\e^{\theta t}c+\lambda_2\e^{\theta t}r-\beta_2\e^{\theta t}c +(1-\alpha) u_2\e^{2\theta t}rq/N+(1-\alpha) u_1\e^{\theta t}q$$
and thus
$$\theta c+\dot{c}=-\lambda_1c+\lambda_2r-\beta_2c +(1-\alpha) u_2\e^{\theta t}rq/N+(1-\alpha) u_1q.$$
Subtracting the corresponding barred equation from the above equation we get
\[
\begin{split}
\theta (c-\bar c)+ \dot{c}-\dot{\bar c}
& = -(\lambda_1+\beta_2)(c-\bar c)+\lambda_2(r-\bar r) \\
& \quad +(1-\alpha)\e^{\theta t}(u_2rq-\bar u_2\bar r\bar q)/N+(1-\alpha) (u_1q-\bar u_1\bar q).
\end{split}
\]
Multiplying by $(c-\bar{c})$, integrating from $0$ to $T$ and noting that $c(0)=\bar{c}(0)$ we have
\begin{equation}\notag 
\begin{split}
&\frac{1}{2}(c(T)-\bar c(T))^2 + \theta \int_0^T (c-\bar c)^2 dt \\
& =  - (\lambda_1+\beta_2) \int_0^T (c-\bar c)^2 dt + \lambda_2 \int_0^T (c-\bar c)(r-\bar r) dt \\
& \quad + \frac{(1-\alpha) \e^{\theta T}}{N} \int_0^T (u_2rq-\bar u_2\bar r\bar q)(c-\bar c) dt + (1-\alpha) \int_0^T (u_1 q - \bar u_1 \bar q)(c-\bar c) dt
\end{split}
\end{equation}
and there are $C_3,C_4>0$ such that
\begin{equation}\label{eq:estimativa-necessaria-c}
\begin{split}
&\frac{1}{2}(c(T)-\bar c(T))^2 + \theta \int_0^T (c-\bar c)^2 dt \\
& = (\lambda_2/2+(1-\alpha) C_3\e^{\theta T}/N+ (1-\alpha)C_4-\lambda_1-\beta_2) \int_0^T (c-\bar c)^2 dt\\
& \quad + (\lambda_2/2+(1-\alpha) C_3\e^{\theta T}/N) \int_0^T (r-\bar r)^2 dt\\
& \quad + (1-\alpha) (C_3\e^{\theta T}/N+C_4) \int_0^T (q-\bar q)^2 dt \\
& \quad + \frac{(1-\alpha) C_3\e^{\theta T}}{N} \int_0^T (u_2-\bar u_2)^2 dt + (1-\alpha) C_4 \int_0^T (u_1- \bar u_1)^2 dt
\end{split}
\end{equation}

By the third equation in~\eqref{eq:modelo} we get
$$\theta e^{\theta t}q+e^{\theta t} \dot{q}=-u_2\e^{2\theta t}qr/N-u_1\e^{\theta t}q+\beta_1\e^{\theta t}r + \beta_2\e^{\theta t}c $$
and thus
$$\theta q+\dot{q}=-u_2\e^{\theta t}qr/N-u_1q+\beta_1r + \beta_2c.$$
Subtracting the corresponding barred equation from the above equation we get
\[
\begin{split}
\theta (q-\bar q)+ \dot{q}-\dot{\bar q}
& = - \e^{\theta t} (u_2qr-\bar u_2\bar q\bar r)/N-(u_1q-\bar u_1\bar q)+\beta_1(r-\bar r) + \beta_2(c-\bar c).
\end{split}
\]
Multiplying by $(q-\bar{q})$, integrating from $0$ to $T$ and noting that $q(0)=\bar{q}(0)$ we have
\begin{equation}\notag 
\begin{split}
&\frac{1}{2}(q(T)-\bar q(T))^2 + \theta \int_0^T (q-\bar q)^2 dt \\
& =  - \frac{\e^{\theta T}}{N} \int_0^T (u_2 qr-\bar u_2\bar q\bar r)(q-\bar q) dt -\int_0^T (u_1 q-\bar u_1\bar q)^2 dt \\
& \quad + \beta_1 \int_0^T (r-\bar r)(q-\bar q) dt + \beta_2 \int_0^T (c - \bar c)(q- \bar q) dt
\end{split}
\end{equation}
and there are $C_5,C_6>0$ such that
\begin{equation}\label{eq:estimativa-necessaria-q}
\begin{split}
&\frac{1}{2}(q(T)-\bar q(T))^2 + \alpha \int_0^T (q-\bar q)^2 dt \\
& =  (\beta_1/2- \e^{\theta T} C_5/N) \int_0^T (r-\bar r)^2 dt + \beta_2/2 \int_0^T (c - \bar c)^2 dt\\
& \quad +(\beta_1/2+\beta_2/2-\e^{\theta T} C_5/N-C_6) \int_0^T (q-\bar q)^2 dt \\
& \quad - \frac{\e^{\theta T} C_5}{N} \int_0^T (u_2-\bar u_2)^2 dt- C_6 \int_0^T (u_1-\bar u_1)^2 dt
\end{split}
\end{equation}
To obtain a bound for the controls we use the conditions given by~\eqref{eq:Pontryagin-CPR-Mayer-6} and~\eqref{eq:Pontryagin-CPR-Mayer-7}. We have
\begin{equation}\label{m1-bar-m1}
\begin{split}
& (u_1-\bar u_1)^2 \\
& \le \left[(p_3-p_1\alpha-p_2(1-\alpha))P/(2\kappa_2)-(\bar p_3-\bar p_1\alpha-\bar p_2(1-\alpha))\bar P/(2\kappa_2)\right]^2\\
& \le (C_7+\tilde C_7\e^{\theta T})[(p-\bar p)^2+(\phi_1-\bar \phi_1)^2+(\phi_2-\bar \phi_2)^2+(\phi_3-\bar \phi_3)^2]
\end{split}
\end{equation}
and
\begin{equation}\label{m2-bar-m2}
\begin{split}
& (u_2-\bar u_2)^2 \\
& \le \left[(p_3-p_1\alpha-p_2(1-\alpha))PR/(2\kappa_3N)-(\bar p_3-\bar p_1\alpha-\bar p_2(1-\alpha))\bar P\bar R/(2\kappa_3N)\right]^2\\
& \le (C_8+\tilde C_8\e^{\theta T})[(p-\bar p)^2+(r-\bar r)^2+(\phi_1-\bar \phi_1)^2+(\phi_2-\bar \phi_2)^2+(\phi_3-\bar \phi_3)^2].
\end{split}
\end{equation}
Next, using~\eqref{m1-bar-m1} and~\eqref{m2-bar-m2}, we obtain
\begin{equation}\label{eq:estimativa-necessaria-phi-1}
\begin{split}
&\frac{1}{2}(\phi_1(0)-\bar \phi_1(0))^2 + \theta \int_0^T (\phi_1-\bar \phi_1)^2 dt \\
& \le (C_9+\tilde C_9\e^{\theta T})\int_0^T (u_2-\bar u_2)^2\\ & \quad +(\phi_1-\bar\phi_1)^2+(\phi_2-\bar\phi_2)^2+(\phi_3-\bar\phi_3)^2
+(c-\bar c)^2+(p-\bar p)^2 \dt \\
& \le (C_{10}+\tilde C_{10}\e^{\theta T})  \int_0^T(\phi_1-\bar\phi_1)^2+(\phi_2-\bar\phi_2)^2\\
& \quad +(\phi_3-\bar\phi_3)^2
+(c-\bar c)^2+(p-\bar p)^2+(r-\bar r)^2 \dt,
\end{split}
\end{equation}
\begin{equation}\label{eq:estimativa-necessaria-phi-2}
\begin{split}
&\frac{1}{2}(\phi_2(0)-\bar \phi_2(0))^2 + \theta \int_0^T (\phi_2-\bar \phi_2)^2 dt \\
& \le (C_{11}+\tilde C_{11}\e^{\theta T}) \int_0^T(u_2-\bar u_2)^2+(\phi_1-\bar\phi_1)^2\\
& \quad +(\phi_2-\bar\phi_2)^2+(\phi_3-\bar\phi_3)^2
+(r-\bar r)^2+(p-\bar p)^2 \dt \\
& \le (C_{12}+\tilde C_{12}\e^{\theta T}) \int_0^T(\phi_1-\bar\phi_1)^2+(\phi_2-\bar\phi_2)^2+(\phi_3-\bar\phi_3)^2\\
& \quad +(p-\bar p)^2+(r-\bar r)^2 \dt
\end{split}
\end{equation}
and
\begin{equation}\label{eq:estimativa-necessaria-phi-3}
\begin{split}
&\frac{1}{2}(\phi_3(0)-\bar \phi_3(0))^2 + \theta \int_0^T (\phi_3-\bar \phi_3)^2 dt \\
& \le (C_{13}+\tilde C_{13}\e^{\theta T}) \int_0^T (u_1-\bar u_1)^2+(u_2-\bar u_2)^2+(\phi_1-\bar\phi_1)^2 \dt\\
& \quad +(\phi_2-\bar\phi_2)^2 +(\phi_3-\bar\phi_3)^2+(r-\bar r)^2+(c-\bar c)^2 \dt \\
& \le (C_{14}+\tilde C_{14}\e^{\theta T}) \int_0^T (\phi_1-\bar\phi_1)^2+(\phi_2-\bar\phi_2)^2\\
& \quad +(\phi_3-\bar\phi_3)^2
+(p-\bar p)^2+(r-\bar r)^2+(c - \bar c)^2 \dt.
\end{split}
\end{equation}

Let
$$\Psi(t)=(r(t)-\bar r(t))^2+(c(t)-\bar c(t))^2+(q(t)-\bar q(t))^2$$
and
$$\Phi(t)=(\phi_1(t)-\bar \phi_1(t))^2+(\phi_2(t)-\bar \phi_2(t))^2+(\phi_3(t)-\bar \phi_3(t))^2.$$
Adding equations ~\eqref{eq:estimativa-necessaria-r},~\eqref{eq:estimativa-necessaria-c}, ~\eqref{eq:estimativa-necessaria-q},~\eqref{eq:estimativa-necessaria-phi-1},~\eqref{eq:estimativa-necessaria-phi-2} and~\eqref{eq:estimativa-necessaria-phi-3} we obtain for the sum of left-hand sides
$$\frac{1}{2}\Psi(T)+\frac{1}{2}\Phi(0)+\theta\int_0^T\Psi(T)+\Phi(T)dt$$
and thus
\[
\begin{split}
&\frac{1}{2}[\Psi(T)+\Phi(0)]+\alpha\int_0^T\Psi(T)+\Phi(T)dt \\
& \le \tilde{C}\int_0^T\Psi(T)+\Phi(T)dt+\hat{C}e^{\alpha T}\int_0^T\Psi(T)+\Phi(T)dt\\
\end{split}
\]
witch is equivalent to
\begin{equation}\label{ineq}
\begin{split}
&\frac{1}{2}[\Psi(T)+\Phi(0)]+(\theta-\tilde{C}-\hat{C}e^{\theta T})\int_0^T\Psi(T)+\Phi(T)dt \le 0.
\end{split}
\end{equation}

We now choose $\theta$ so that
$$\theta>\tilde{C}+\hat{C}$$
and note that $\frac{\theta-\tilde{C}}{\hat{C}}>1$. Subsequently, we choose $T$ such that
$$T<\frac{1}{\theta}\ln\left(\frac{\theta-\tilde{C}}{\hat{C}}\right).$$

Then,
$$\theta T<\ln\left(\frac{\theta-\tilde{C}}{\hat{C}}\right) \quad \Rightarrow \quad e^{\alpha T}<\frac{\theta-\tilde{C}}{\hat{C}}.$$

It follows that $\theta-\tilde{C}-\hat{C}e^{\theta T}>0$, so inequality~\eqref{ineq} can hold if and only if, for all $t\in[0,T]$,
we have $r(t)=\bar r(t)$, $c(t)=\bar c(t)$, $q(t)=\bar q(t)$, $\phi_1(t)=\bar \phi_1(t)$, $\phi_2(t)=\bar \phi_2(t)$, and $\phi_3(t)=\bar \phi_3(t)$. But this is equivalent to
$R(t)=\bar R(t)$, $C(t)=\bar C(t)$, $P(t)=\bar P(t)$, $p_1(t)=\bar p_1(t)$, $p_2(t)=\bar p_2(t)$ and $p_3(t)=\bar p_3(t)$.

This establishes the uniqueness of the optimal control on the interval $[0,T]$.

We have two possibilities. If $T\ge t_f$, then we have uniqueness on the whole interval and we are done.
Otherwise, if $T < t_f$, considering the optimal control problem whose initial conditions
on time $T$ coincide with the values of the state variables on the end-time of the
interval $[0,T]$, we can obtain
uniqueness on $[T,2T]$ (note that, by the forward invariance of the set
$$\mathcal S=\{(C,P,R)\in (\R^+_0)^3: C+R+P \le C_0+R_0+P_0\},$$
and since the constants $\widetilde{C}$ and $\widehat{C}$ in \eqref{ineq}
depend only on the values of the several state and co-state variables on $\mathcal S$,
we still have the same $T$). Iterating the procedure, we conclude that we have uniqueness on the whole interval $[0,t_f]$, after a finite
number of steps. The proof is complete.
\end{proof}

\section{The optimal control problem with the $L^1$ objective functional.\label{appendixA}}
A quadratic objective favors lower rates: a  recruitment rate lower than the maximum, $u_{+}$, contributes with a value, much smaller, than $u_{+}$, to the cost (note that $u^2_{+}\ll u_{+}$). This feature is not related to the system, but is imposed by the choice of the functional and the maximum value of $u_{+}$. Hence, the linear objective, by incorporating the totality of controls, may be a more adequate choice.

Let us consider the linear objective:
\begin{equation}
\notag 
\mathcal{J}'(P,u_1,u_2) =\int_0^{t_f}\kappa_1\,P+ \kappa_2\, u_1+\kappa_3 \, u_2 ~dt
\end{equation}
where $0<\kappa_1,\kappa_2,\kappa_3 <\infty$ are weights that balance out the relative importance of the three terms. 

Using the adjoint variables $p=(p_1,p_2,p_3)$, the Hamiltonian of the linear objective functional and  system (\ref{eq:modelo}) is the following
\begin{equation}\notag 
\begin{split}
& \ham'(t,(C,R,P),(p_1,p_2,p_3),(u,v))\\
&  =  \kappa_1 P +\kappa_2 u_1+\kappa_3 u_2\\
& \quad + p_1(-\lbd_2 R+\lbd_1 C -\gamma(t) R+\alpha_1\, u_1 P +\alpha_2\, (\beta(t)+u_2) P R/N )\\
& \quad +p_2(-\lbd_1 C+\lbd_2 R -\gamma(t) C + (1-\alpha_2)(\beta(t)+u_2) PR/N + u_1(1-\alpha_1)P)\\
& \quad +p_3(- (\beta(t)+u_2) PR/N-u_1 P +\gamma(t) R +\gamma(t) C)
\end{split}
\end{equation}

We obtain the adjoint equations by
\[\dot p_1\parc{t} =  -\frac{\partial \ham'}{\partial R}, \quad \dot p_2\parc{t} =  -\frac{\partial \ham'}{\partial C}\; \text{ and }\;\dot p_3\parc{t} =  -\frac{\partial \ham'}{\partial P},\]
whose expressions are as in (\ref{eq:Pontryagin-CPR-Mayer-1}).

Once the terminal state, $(R(t_f),C(t_f),P(t_f))$, is free, the transversality conditions are again
\[ p_1(t_f)=p_2(t_f)=p_3(t_f)=0.\]

Since $\mathcal{H}'$ is linear in the control, this minimization problem can easily be solved~\cite{ledzewicz2011,preprint:Delfim2016}. Defining the so-called \emph{switching functions}, $\Phi_1$ and $\Phi_2$ as

\[\Phi_1(t)=\kappa_3+(\alpha_1 p_1(t) +(1-\alpha_1)p_2(t)-p_3(t))P(t)  \]
and
\[\Phi_2(t)=\kappa_3+(\alpha_2 p_1(t)+(1-\alpha_2)p_2(t)-p_3(t))P(t) R(t)/N. \]


Then the minimum condition for the optimal controls $u_1(t), u_2(t)$, is equivalent to the minimization problem $\Phi_i(t) u_i(t)={\min \atop {0\leqslant u_i\leqslant {u_i}_{\max}} } \Phi_i(t) u_i,i=1,2$.  This gives the following control functions

\begin{equation}u_i\parc{t}=\begin{cases}
  0 & \text{ if }\Phi_i(t)>0\\
  {u_i}_{\max} & \text{ if }\Phi_i(t)<0\\
\text{singular} & \text{ if }\Phi_i(t)=0 \text{ on an open subset of }[0,t_f]\end{cases},i=1,2.\label{eq:controlo_L1}\end{equation}

We do not discuss singular controls, since singular arcs never appeared in our computations. In view of the transversality conditions, the terminal values of the switching functions are: $\Phi_1(t_f)=\kappa_2$ and $\Phi_2(t_f)=\kappa_3$. According with the definition of control (\ref{eq:controlo_L1}), we may conclude that $u_1(t_f)=u_2(t_f)=0$, as with the quadratic functional.\\


\section*{Acknowledgment}
S. Rosa was partially supported by the Portuguese Science Foundation \linebreak
(FCT) through IT (project UID/EEA/50008/2013), P. Rebelo and C.M. Silva by FCT
through CMAUBI (project UID/MAT/00212/2013), H. Alves by FCT through
NECE  (PEst-OE/ EGE/UI0403/2014),  and P.G. Carvalho by FCT though CIDESD.

\bibliographystyle{elsart-num-sort}


\end{document}